\begin{document}

\newtheorem{theorem}[subsection]{Theorem}
\newtheorem{proposition}[subsection]{Proposition}
\newtheorem{lemma}[subsection]{Lemma}
\newtheorem{corollary}[subsection]{Corollary}
\newtheorem{conjecture}[subsection]{Conjecture}
\newtheorem{prop}[subsection]{Proposition}
\numberwithin{equation}{section}
\newcommand{\mr}{\ensuremath{\mathbb R}}
\newcommand{\mc}{\ensuremath{\mathbb C}}
\newcommand{\dif}{\mathrm{d}}
\newcommand{\intz}{\mathbb{Z}}
\newcommand{\ratq}{\mathbb{Q}}
\newcommand{\natn}{\mathbb{N}}
\newcommand{\comc}{\mathbb{C}}
\newcommand{\rear}{\mathbb{R}}
\newcommand{\prip}{\mathbb{P}}
\newcommand{\uph}{\mathbb{H}}
\newcommand{\fief}{\mathbb{F}}
\newcommand{\majorarc}{\mathfrak{M}}
\newcommand{\minorarc}{\mathfrak{m}}
\newcommand{\sings}{\mathfrak{S}}
\newcommand{\fA}{\ensuremath{\mathfrak A}}
\newcommand{\mn}{\ensuremath{\mathbb N}}
\newcommand{\mq}{\ensuremath{\mathbb Q}}
\newcommand{\half}{\tfrac{1}{2}}
\newcommand{\f}{f\times \chi}
\newcommand{\summ}{\mathop{{\sum}^{\star}}}
\newcommand{\chiq}{\chi \bmod q}
\newcommand{\chidb}{\chi \bmod db}
\newcommand{\chid}{\chi \bmod d}
\newcommand{\sym}{\text{sym}^2}
\newcommand{\hhalf}{\tfrac{1}{2}}
\newcommand{\sumstar}{\sideset{}{^*}\sum}
\newcommand{\sumprime}{\sideset{}{'}\sum}
\newcommand{\sumprimeprime}{\sideset{}{''}\sum}
\newcommand{\sumflat}{\sideset{}{^\flat}\sum}
\newcommand{\shortmod}{\ensuremath{\negthickspace \negthickspace \negthickspace \pmod}}
\newcommand{\V}{V\left(\frac{nm}{q^2}\right)}
\newcommand{\sumi}{\mathop{{\sum}^{\dagger}}}
\newcommand{\mz}{\ensuremath{\mathbb Z}}
\newcommand{\leg}[2]{\left(\frac{#1}{#2}\right)}
\newcommand{\muK}{\mu_{\omega}}
\newcommand{\thalf}{\tfrac12}
\newcommand{\lp}{\left(}
\newcommand{\rp}{\right)}
\newcommand{\Lam}{\Lambda_{[i]}}
\newcommand{\lam}{\lambda}
\def\L{\fracwithdelims}
\def\om{\omega}
\def\pbar{\overline{\psi}}
\def\phi{\varphi}
\def\lam{\lambda}
\def\lbar{\overline{\lambda}}
\newcommand\Sum{\Cal S}
\def\Lam{\Lambda}
\newcommand{\sumtt}{\underset{(d,2)=1}{{\sum}^*}}
\newcommand{\sumt}{\underset{(d,2)=1}{\sum \nolimits^{*}} \widetilde w\left( \frac dX \right) }

\theoremstyle{plain}
\newtheorem{conj}{Conjecture}
\newtheorem{remark}[subsection]{Remark}

\makeatletter
\def\widebreve{\mathpalette\wide@breve}
\def\wide@breve#1#2{\sbox\z@{$#1#2$}%
     \mathop{\vbox{\m@th\ialign{##\crcr
\kern0.08em\brevefill#1{0.8\wd\z@}\crcr\noalign{\nointerlineskip}%
                    $\hss#1#2\hss$\crcr}}}\limits}
\def\brevefill#1#2{$\m@th\sbox\tw@{$#1($}%
  \hss\resizebox{#2}{\wd\tw@}{\rotatebox[origin=c]{90}{\upshape(}}\hss$}
\makeatletter

\title[Moments of quadratic Dirichlet $L$-functions over Function Fields]{Moments of quadratic Dirichlet $L$-functions over Function Fields}

%%\date{\today}
\author{Peng Gao and Liangyi Zhao}

\begin{abstract}
 In this paper, we establish the expected order of magnitude of the $k$th-moment of quadratic Dirichlet $L$-functions associated to
hyperelliptic curves of genus $g$ as well as of prime moduli over a fixed finite field $\mathbb{F}_{q}$ for all real $k \geq 0$.
\end{abstract}

\maketitle

\noindent {\bf Mathematics Subject Classification (2010)}: 11M06, 11M38, 11R58   \newline

\noindent {\bf Keywords}: function fields, moments, quadratic Dirichlet $L$-functions, lower bounds, order of magnitude, upper bounds, prime moduli

\section{Introduction}
\label{sec 1}

The moments of central values of families of $L$-functions is a topic of great interest in analytical number theory, as they carry important arithmetic information. Due to the works of J. P. Keating and N. C. Snaith \cite{Keating-Snaith02} and of J. B.
Conrey, D. W. Farmer, J. P. Keating, M. O. Rubinstein and N. C. Snaith \cite{CFKRS}, we now have precise conjectures concerning the asymptotic behaviors of these moments.  Moreover, several systematic {\it modus operandi} towards establishing sharp upper and lower bounds for these moments of the conjectured order of magnitude are now available. For lower bounds, there are the works of Z. Rudnick and K. Soundararajan \cites{R&Sound, R&Sound1}, M. Radziwi{\l\l} and K. Soundararajan \cite{Radziwill&Sound1} and W. Heap and K. Soundararajan \cite{H&Sound}.  For upper bounds, the methods were developed by K. Soundararajan \cite{Sound2009} with a refinement by A. J. Harper \cite{Harper}, M. Radziwi{\l\l} and K. Soundararajan \cite{Radziwill&Sound}.  Note that the approach used in \cite{Sound2009} relies on the truth of the generalized Riemann hypothesis (GRH). \newline

  Among the many families of $L$-functions studied in the literature, that of quadratic Dirichlet $L$-functions received much attention.
Asymptotic formulas for this family at the central point were obtained by M. Jutila \cite{Jutila} for the first and the second moments, by K.
Soundararajan \cite{Sound2009} for the third moment and by Q. Shen \cite{Shen} for the fourth under GRH. For various improvements on the
error terms of these moments, see \cites{ViTa, DoHo, Young1, Young2, Sound2009, Sono}. In addition, combining the works of \cites{Harper, Sound2009, R&Sound, R&Sound1, Radziwill&Sound1, Gao2021-2, Gao2021-3}, sharp lower (resp. upper) bounds for the $k$-th moment of the above family have been established for all real $k \geq 0$ (resp. $0 \leq k \leq 2$) unconditionally, while sharp upper bounds have been established for all
real $k > 2$ under GRH. \newline

  In \cite{Jutila},  M. Jutila also studied the first moment of the family of quadratic Dirichlet $L$-functions with prime moduli, resolving a conjecture of D. Goldfeld and C. Viola \cite{G&V}.  S. Baluyot and K. Pratt \cite{BP} obtained an asymptotic formula for the second moment of the quadratic family of Dirichlet $L$-functions with prime moduli under GRH and also obtained sharp upper and lower bounds for the second moment unconditionally as well as for the third moment under certain assumptions. In \cite{G&Zhao11}, the authors obtained sharp upper and lower bounds for the $k$-th moment of quadratic Dirichlet $L$-functions averaged over prime moduli for all real $k \geq 0$, using the method developed by W. Heap and K. Soundararajan in \cite{H&Sound} for the lower bounds, and that by K. Soundararajan in \cite{Sound2009} together with its refinement by A. J. Harper in \cite{Harper} for the upper bounds. \newline

 In this paper, we consider the rational function field analogue of the above families of quadratic Dirichlet $L$-functions. To state our
results,  we fix a finite field $\mathbb{F}_{q}$ of cardinality $q \equiv 1 \pmod 4$ and denote by $A=\mathbb{F}_{q}[T]$ the
polynomial ring over $\mathbb{F}_{q}$. We reserve the symbol $P$ for a monic, irreducible polynomial in $A$ and we shall often refer to $P$ as a
prime in $A$. For a polynomial $f \in A$, we denote its degree by $\deg (f)$ and define the norm $|f|$ as $|f|=q^{\deg(f)}$ for
$f\neq 0$ and $|f|=0$ for $f=0$.  We set
\begin{align*}
%%\label{2.6}
\mathcal{H}_{2g+1,q}=& \{\text{$D\in A$ : square--free, monic and $\deg(D)=2g+1$}\} \quad \mbox{and} \\
\mathcal{P}_{2g+1,q}=& \{P \in A : P \ \text{ prime},  \deg(P)=2g+1 \}.
\end{align*}

  For $D \in \mathcal{H}_{2g+1,q}$, we write $\chi_{D}$ for the quadratic character defined in Section \ref{sec 2.1} and $L(s, \chi_D)$ for
the associated $L$-function. It is shown in \cite[Section 2]{Andrade&Keating12} that $L(s, \chi_D)$  is the numerator of the zeta
function associated to the hyperelliptic curve $C_D:y^2=D$ of genus $g$.  Similar to the classical setting, we are interested in the behavior
as $g \rightarrow \infty$ of the $k$-th moment of the quadratic families of $L$-functions at the central point given by
\begin{align}
\label{kthmoment}
\begin{split}
  & \sum_{D \in \mathcal{H}_{2g+1,q}} L ( \half, \chi_D ) ^k.
\end{split}
\end{align}

   An asymptotic formula for the first moment of the above family was proved by J. C. Andrade and J. P. Keating \cite{Andrade&Keating12}. An
extension of the result to general $q$ can be found in H. Jung \cite{Jung}. In \cite{Florea17}, A. Florea obtained a refinement of the main term
in the above mentioned result of \cite{Andrade&Keating12} for $q$ being prime. In \cites{Florea17-1, Florea17-2}, A. Florea further
established asymptotic formulas for the second, third and fourth moments of this family. A secondary term in the asymptotic formula of the third
moment was later found by A. Diaconu \cite{Diaconu19}, using multiple Dirichlet series. The results are all consistent with the conjectured
formulas for the integral moments of this family by J. C. Andrade and J. P. Keating \cite{Andrade&Keating14}. See \cite{RW} for a numerical
evidence of the conjectures and \cite{DT} for a refinement of the same. It is also interesting to consider the expression in \eqref{kthmoment} by restricting the sum over $D$ to $D \in \mathcal{H}_{2g+2,q}$ (similarly defined as $\mathcal{H}_{2g+1,q}$). Such
results can be found in \cites{Jung1, AM}. \newline

  In \cite{Andrade16}, J. C. Andrade applied the method of Z. Rudnick and K. Soundararajan in \cites{R&Sound, R&Sound1} to prove sharp
lower bounds for the $k$-th moment given in \eqref{kthmoment} for all odd $q$ and every even natural number $k$.  In the other direction, A. Florea obtained \cite[Theorem 2.7]{Florea17-2} upper bounds, optimal save for an $\varepsilon$ power of $g$, for the $k$-th moment given in
\eqref{kthmoment} for all prime numbers $q \equiv 1 \pmod 4$ and any real number $k>0$. \newline

  We are also interested in the behavior
as $g \rightarrow \infty$ of the $k$-th moment of this family of $L$-functions at the central point given by
\begin{equation}
\label{kthmoment1}
  \sum_{P \in \mathcal{P}_{2g+1,q}} L(\half, \chi_P)^k.
\end{equation}

J. Andrade and J. P. Keating \cite{Andrade&Keating} obtained asymptotic formulas for \eqref{kthmoment1} in the cases $k=1$ and $2$.  An improvement for the second moment result is given by H. M. Bui and A. Florea in \cite{B&F20}. Extending the recipe developed by J. B. Conrey, D. W. Farmer, J. P. Keating, M. O. Rubinstein, and N. C. Snaith \cite{CFKRS} to the function field case, J. C. Andrade,  H. Jung and A. Shamesaldeen \cite{AJS} conjectured that for every integer $k \geq 1$, there exists an polynomial $Q_k$, with an explicit expression, of degree $k(k+1)/2$ such that, as $g \rightarrow \infty$,
\begin{equation*}
%%\label{kthmomentasymp}
  \sum_{P \in \mathcal{P}_{2g+1,q}}L(\half, \chi_P)^k \sim \frac {X}{\log_q X}Q_k(\log_q X).
\end{equation*}
 Here and throughout the paper, we fix $X=q^{2g+1}$ and we shall also use the convention that when summing over polynomials in a subset $S \subset A$, the symbol $\sum_{f \in S}$ always stands for a summation over monic $f \in S$, unless otherwise specified, as most of the sums appearing in the paper are in fact about summing over ideals of $A$. \newline

 The aim of this paper is to extend the above results of Andrade and Florea on \eqref{kthmoment} to all real $k \geq 0$, and to establish the $k$-th moment of the family of $L$-functions given in \eqref{kthmoment1} to the conjectured order of magnitude.  Our main result gives the following optimal bounds for the $k$-th moment of
 \[ \{ L(1/2, \chi_D) : D \in \mathcal{H}_{2g+1, q}\} \quad \mbox{and} \quad \{ L(1/2, \chi_P) : P \in \mathcal{P}_{2g+1, q}\}, \quad \mbox{as} \quad g \rightarrow \infty. \]
\begin{theorem}
\label{thmorderofmag}
   Suppose that $q \equiv 1 \pmod 4$ is a prime number and $X=q^{2g+1}$. For every real number $k \geq 0$ we have for large $g$,
\begin{align*}
%%\label{orderofmag}
  \sum_{D\in\mathcal{H}_{2g+1,q}} L \left( \half,\chi_{D} \right)^{k} \asymp_k  X(\log_q X)^{k(k+1)/2} \quad \mbox{and} \quad
  \sum_{P \in\mathcal{P}_{2g+1,q}} L(\tfrac{1}{2},\chi_{P})^{k}  \asymp_k  X(\log_q X)^{k(k+1)/2-1}.
\end{align*}
\end{theorem}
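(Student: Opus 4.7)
\section*{Proof proposal}

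The plan is to adapt the two complementary machineries cited in the introduction to the rational function field setting: the Heap--Soundararajan \cite{H&Sound} construction of Dirichlet polynomial lower bounds, and the Soundararajan--Harper upper bound technique \cites{Sound2009, Harper}. The decisive structural advantage over the number field case is that the Riemann hypothesis for curves over $\mathbb{F}_q$ (Weil) is a theorem, so every step that would ordinarily require GRH is unconditional here. Both bounds reduce, after general soft inequalities, to computing first and second moments over the families $\mathcal{H}_{2g+1,q}$ and $\mathcal{P}_{2g+1,q}$ of short Dirichlet polynomials in $\chi_D$ and $\chi_P$ respectively. The proof of the two sides of each asymptotic is essentially independent, so I would organize the paper into four parallel arguments: upper and lower bounds over $\mathcal{H}_{2g+1,q}$, and then over $\mathcal{P}_{2g+1,q}$.

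For the lower bounds I would follow Heap--Soundararajan. Fix a parameter $J \asymp \log\log g$ and a sequence of lengths $\ell_1 < \ell_2 < \cdots < \ell_J$ in geometric progression summing to a small multiple of $2g+1$. Define truncated Dirichlet polynomials
\[
\mathcal{N}_j(\chi,\alpha) = \sum_{\substack{f \text{ monic}\\ \Omega(f)\leq 100 \ell_j,\, d(f)\leq \ell_j}} \frac{\alpha^{\Omega(f)} g_j(f)\chi(f)}{|f|^{1/2}},
\]
for suitable multiplicative coefficients $g_j$, and set $\mathcal{N}(\chi) = \prod_{j=1}^J \mathcal{N}_j(\chi, k/2)$. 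By Hölder's inequality applied to the split $|L(\tfrac12,\chi_D)|^k = |L(\tfrac12,\chi_D)\mathcal{N}(\chi_D)|^{\alpha}\cdot|\mathcal{N}(\chi_D)|^{-\alpha}|L(\tfrac12,\chi_D)|^{k-\alpha}$ with carefully chosen $\alpha$, the target lower bound reduces to a twisted first moment $\sum_D L(\tfrac12,\chi_D)\mathcal{N}(\chi_D)\overline{\mathcal{N}(\chi_D)}^{k-1}$ and a pure second moment $\sum_D |\mathcal{N}(\chi_D)|^2$. Both are evaluated via the approximate functional equation for $L(\tfrac12,\chi_D)$ together with orthogonality over the relevant family: for $\mathcal{H}_{2g+1,q}$ I would use the Poisson summation formula over square-free polynomials in the Florea--Andrade form, and for $\mathcal{P}_{2g+1,q}$ I would detect primality either via M\"obius or a Vaughan--type identity combined with the explicit formula (Weil's theorem) for prime polynomial counts in arithmetic progressions. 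A combinatorial matching as in \cite{H&Sound} then produces the diagonal contribution of size $X(\log_q X)^{k(k+1)/2}$ (respectively $X(\log_q X)^{k(k+1)/2-1}$ after a factor $1/\log_q X$ coming from the density of primes).

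For the upper bounds I would use the function field analogue of Soundararajan's pointwise inequality: for any $x \leq 2g+1$,
\[
\log|L(\tfrac12,\chi_D)| \leq \mathrm{Re}\,\sum_{\substack{P \text{ prime}\\ d(P)\leq x}} \frac{\chi_D(P)}{|P|^{1/2+1/x}}\cdot\frac{d(x)-d(P)}{d(x)} + \frac{k}{2}\cdot\frac{2g+1}{x} + O(1),
\]
which follows from Hadamard factorization using Weil RH. Exponentiating gives $|L(\tfrac12,\chi_D)|^k \leq e^{k\mathrm{Re}(S_x(\chi_D))+O(k g/x)}$. Applying Harper's partitioning of the prime support into intervals $[x_{j-1},x_j]$ with the lengths increasing suitably rapidly, I bound the $k$-th moment by a sum over tuples of exceptional sets. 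On each set, the large-deviation control comes from high even moments of the short prime sums, which by the hypergeometric expansion reduce once again to sums $\sum_{D}\chi_D(f)$ over monic square-free $D$ (respectively over primes $P$) with $f$ having bounded prime factors, estimable by Poisson/PNT in function fields. This gives the matching upper bound of order $X(\log_q X)^{k(k+1)/2}$, and $X(\log_q X)^{k(k+1)/2-1}$ for the prime moduli family.

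The hardest technical step, and the main obstacle, will be the twisted first moment over $\mathcal{P}_{2g+1,q}$: evaluating
$\sum_{P\in\mathcal{P}_{2g+1,q}} L(\tfrac12,\chi_P) a(P)$ for a multiplicative coefficient $a$ supported on polynomials of degree up to a small power of $g$. Unlike the square-free case, there is no direct Poisson formula; one must either detect primes through a M\"obius convolution (leading to a delicate off-diagonal analysis with smooth/rough cut-offs) or work directly with Dirichlet $L$-functions on $\mathbb{F}_q[T]$ and invoke the explicit formula. Extending the analogous computation of \cite{B&F20} for the second moment, and dovetailing it with the Heap--Soundararajan combinatorial matching, is the core new input that must be engineered carefully so that the resulting error terms remain of size $o(X)$ after summing over the many pieces produced by the lower-bound machinery.
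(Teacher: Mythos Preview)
Your overall architecture---Heap--Soundararajan for the lower bound and Soundararajan--Harper for the upper bound, with Weil's theorem replacing GRH---is exactly what the paper does. Two points, however, deserve correction.

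First, you identify the twisted first moment over $\mathcal{P}_{2g+1,q}$ as the hardest step and propose M\"obius or Vaughan-type decompositions to detect primes. In fact this is the \emph{easy} step: the Weil bound for character sums over primes (Rudnick's estimate $\sum_{P\in\mathcal{P}_{2g+1,q}}\chi_P(f)\ll X^{1/2}d(f)/\log_q X$ for $f\neq\square$) immediately gives power saving on the non-square contribution, and the square contribution is handled by the prime polynomial theorem. The twisted first moment in Proposition~\ref{Prop1'} comes out in a page with error $O(X^{3/4+\varepsilon}|l|^\varepsilon)$, more than enough for the Heap--Soundararajan machinery.

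Second, and this is the real gap: your H\"older reduction for the lower bound produces only a twisted first moment and a pure Dirichlet-polynomial moment $\sum|\mathcal{N}|^2$. That suffices for $k\ge 1$, but for $0<k<1$ the Heap--Soundararajan scheme also requires control of a mollified second moment $\sum_{P}L(\tfrac12,\chi_P)^2\,\mathcal{M}(P)$ (see the three-exponent H\"older in \eqref{basicboundksmall}). The natural approach would be a twisted second moment formula over primes, but the best available result (Proposition~\ref{Prop1}, extending \cite{B&F20}) has \emph{no} power saving in the error, so it cannot be fed into the lower-bound machine. The paper's resolution, which you do not anticipate, is to abandon asymptotics for this term and instead bound $\sum_{P}L(\tfrac12,\chi_P)^2\,\mathcal{M}(P)$ from above by the Soundararajan--Harper method itself (Proposition~\ref{Prop6}): one inserts the pointwise inequality for $\log|L|$, partitions into Harper's sets $\mathcal{S}(j)$, and reduces to moments of short prime sums over $\mathcal{P}_{2g+1,q}$, again evaluated via the Weil bound. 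Thus the upper-bound technology is used \emph{inside} the lower-bound argument, and this hybrid is the genuine new input for the prime-moduli family in the range $0<k<1$.
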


We prove Theorem \ref{thmorderofmag} by establishing sharp lower and upper bounds for the moments.  Throughout the paper, we shall assume that $k>0$ when working with the $k$-th moment of $L$-functions since the case $k=0$ is trivial. For lower bounds,
we apply the lower bounds principle of  W. Heap and K. Soundararajan \cite{H&Sound}.  Note that this process requires an asymptotic formula for the twisted first moment with a power saving error term.   The formulas for the twisted first, second and third moment concerning $\{ L(1/2, \chi_D) : D \in \mathcal{H}_{2g+1, q}\}$ have already been obtained by H. M.  Bui and A. Florea \cite{BF18-1}.  Therefore, we only consider twisted moments concerning $\{ L(1/2, \chi_P) : P \in \mathcal{P}_{2g+1, q}\}$ here.  Our next result extends \cite[Theorem 2.4]{Andrade&Keating} to the twisted first moment.
\begin{proposition}
\label{Prop1'}
  With the notation as above and writing any monic $l \in A$ uniquely as $l=l_1l^2_2$ where $l_1, l_2$ are monic polynomials with $l_1$ monic and square-free, we have for any $\varepsilon>0$,
\begin{align}
\label{eq:1ndmomentP}
\begin{split}
 \sum_{P\in\mathcal{P}_{2g+1,q}} L(\tfrac{1}{2},\chi_{P})\chi_{P}(l)= \frac {X}{(\log_q X) \sqrt{|l_1|}} \Big (\log_q \Big ( \frac {\sqrt{X}}{|l_1|}\Big )+\frac 12 \Big )+O \left(X^{3/4+\varepsilon}|l|^{\varepsilon}\right ).
\end{split}
\end{align}
\end{proposition}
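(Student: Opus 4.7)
The plan is to follow the classical twisted first moment recipe, building on Andrade and Keating's evaluation \cite{Andrade&Keating} of the untwisted case $l=1$.  The first step is an approximate functional equation: because $L(s,\chi_P)$ for $P\in\mathcal{P}_{2g+1,q}$ is a polynomial in $q^{-s}$ of degree $2g$ satisfying $L(s,\chi_P)=(q^{1-2s})^g L(1-s,\chi_P)$, a standard symmetrization yields
\begin{equation*}
L(\tfrac12,\chi_P) \;=\; \sum_{d(f)\le g}\frac{\chi_P(f)}{|f|^{1/2}} \;+\; \sum_{d(f)\le g-1}\frac{\chi_P(f)}{|f|^{1/2}}.
\end{equation*}
Multiplying by $\chi_P(l)$, swapping orders of summation, and using $\chi_P(f)\chi_P(l)=\chi_P(fl)$ (valid since $d(P)=2g+1>d(f)$, so $P\nmid f$), I would then invoke quadratic reciprocity for $\mathbb{F}_{q}[T]$, which is sign-free under $q\equiv 1\pmod 4$, to rewrite $\chi_P(fl)=\chi_{fl}(P)$.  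The problem reduces to estimating $\sum_{P\in\mathcal{P}_{2g+1,q}}\chi_{fl}(P)$ uniformly in the monic $f$.

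Writing $l=l_1l_2^2$ with $l_1$ squarefree, the key observation is that $fl$ is a perfect square if and only if $f=l_1m^2$ for some monic $m$, since the product of two squarefree polynomials is a square only when they coincide.  For such $f$, $\chi_{fl}$ is principal modulo a divisor of $fl$, hence $\sum_P\chi_{fl}(P)=\pi_A(2g+1)=X/\log_q X+O(\sqrt X/\log_q X)$ (no prime of degree $2g+1$ divides $fl$ in the relevant range).  For each admissible $d(m)$ there are $q^{d(m)}$ monic $m$, each contributing weight $|l_1|^{-1/2}q^{-d(m)}$, so combining the two ranges in the approximate functional equation yields the factor $(g-d(l_1)+1)|l_1|^{-1/2}$.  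Substituting $g=(\log_q X-1)/2$ and $d(l_1)=\log_q|l_1|$ gives $g-d(l_1)+1=\log_q(\sqrt X/|l_1|)+\tfrac12$, which reproduces the claimed main term.

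The error comes from those $f$ for which $fl$ is not a square, so the squarefree kernel $A$ of $fl$ has positive degree at most $d(f)+d(l)$.  Weil's Riemann hypothesis for $L(s,\chi_A)$, together with the standard extraction of primes from the von Mangoldt function (whose higher prime-power terms contribute $O(q^g)$), gives
\begin{equation*}
\Big|\sum_{P\in\mathcal{P}_{2g+1,q}}\chi_{fl}(P)\Big| \;\ll\; \frac{(d(f)+d(l))\sqrt X}{\log_q X}.
\end{equation*}
Weighted by $|f|^{-1/2}$ and summed over $d(f)\le g$, the total off-square contribution is at most $\sqrt X(\log_q X)^{-1}\sum_{n=0}^{g}(n+d(l))q^{n/2}\ll (g+d(l))(\log_q X)^{-1}X^{3/4}$, which is absorbed into $O(X^{3/4+\varepsilon}|l|^\varepsilon)$ since $d(l)=\log_q|l|$.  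The main obstacle lies precisely in this last step: one must verify that the Weil bound on $\sum_{d(P)=n}\chi_A(P)$ is linear rather than exponential in $d(A)$ with an absolute constant, and maintain explicit uniformity in $l$ throughout, so that the off-square terms remain genuinely of size $X^{3/4+\varepsilon}|l|^\varepsilon$ even when $l$ shares factors with many of the $f$ and the conductor $A$ fluctuates with $f$.
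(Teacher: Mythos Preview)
Your proposal is correct and follows essentially the same route as the paper: approximate functional equation, split according to whether $fl$ is a square, Weil bound for the non-square terms, and direct evaluation of the square contribution via $f=l_1m^2$. The only real difference is cosmetic: where you count the square terms by hand (using $\#\{m:d(m)=n\}=q^n$ so that each degree contributes $|l_1|^{-1/2}$, giving the factor $g-d(l_1)+1$ at once), the paper recasts $\sum_{d(f)\le (g-d(l_1))/2}|f|^{-1}$ as a contour integral via Perron's formula and reads off the residue at $u=1$. Your direct count is cleaner for this particular moment; the paper's contour formulation is set up to parallel the second-moment computation that follows.

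Your stated ``main obstacle'' is not an obstacle at all. The Weil bound for character sums over primes in $\mathbb{F}_q[T]$ is indeed linear in the degree of the conductor, uniformly: for squarefree $A$ one has $\sum_{d(P)=n}\chi_A(P)\ll d(A)\,q^{n/2}/n$. This is exactly Lemma~\ref{PVlemma} (equation~\eqref{Psumfnonsquare}) in the paper, quoted from Rudnick, and it applies with $A$ equal to the squarefree kernel of $fl$ regardless of how the factors of $l$ interact with $f$. So the uniformity in $l$ you worry about is already built in, and the error term $O(X^{3/4+\varepsilon}|l|^{\varepsilon})$ follows exactly as you outlined.
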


 We also include a result concerning the twisted second moment, extending \cite[Theorem 1.1]{B&F20}.
\begin{proposition}
\label{Prop1}
  With notations as in Proposition~\ref{Prop1'}, we have for any $\varepsilon>0$,
\begin{align}
\label{eq:1ndmomentE}
\begin{split}
 \sum_{P\in\mathcal{P}_{2g+1,q}} & L(\tfrac{1}{2}, \chi_{P})^2\chi_{P}(l)\\
  & =  \frac {X}{(\log_q X)} \frac {d_A(l_1)}{\sqrt{|l_1|}}\prod_{P|l_1} \left( 1+\frac 1{|P|} \right)^{-1} \\
& \hspace*{1cm} \times \left( \frac{1}{3\zeta_q(2)} \left( \frac 12\log_q \frac {X}{|l_1|} \right)^3+C(l) \left( \frac 12\log_q \frac {X}{|l_1|} \right)^2+ \left( \sum_{P|l_1}\frac {\deg(P)/|P|}{1+1/|P|} \right) \frac{1}{\zeta_q(2)} \left( \frac 12\log_q \frac {X}{|l_1|} \right)^2 \right) \\
& \hspace*{1cm} + O\left( |l|^{\varepsilon} \left( \frac 12\log_q \frac {X}{|l_1|} \right)^{3/2+\varepsilon}\frac {X}{\log_q X} \right),
\end{split}
\end{align}
  where $d_A=d_{2,A}$ is the divisor function defined in \eqref{ddef}, $C(l)=1+1/q$ when $\deg(l)$ is even and $C(l)=2$ otherwise.
\end{proposition}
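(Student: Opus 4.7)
The plan is to adapt the Bui--Florea second-moment argument in \cite{B&F20} to the twisted setting. Since $L(s,\chi_P)$ is a polynomial in $q^{-s}$ of degree $2g$, the functional equation (with root number $+1$ because $q\equiv 1\pmod 4$ and $d(P)$ is odd) yields the exact identity
\[
L(\tfrac12,\chi_P)^2 = 2\sum_{d(f)\le 2g-1}\frac{d_A(f)\chi_P(f)}{\sqrt{|f|}}+\frac{1}{q^{g}}\sum_{d(f)=2g}d_A(f)\chi_P(f).
\]
Multiplying by $\chi_P(l)$, summing over $P\in\mathcal{P}_{2g+1,q}$, interchanging summations, and applying function-field quadratic reciprocity (sign-free for $q\equiv 1\pmod 4$) to rewrite $\chi_P(fl)=\chi_{fl}(P)$ reduces matters to controlling $T_f:=\sum_{P\in\mathcal{P}_{2g+1,q}}\chi_{fl}(P)$ weighted by $d_A(f)/\sqrt{|f|}$.

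Writing $l=l_1 l_2^2$ with $l_1$ squarefree, the diagonal consists of those $f$ for which $fl$ is a perfect square, equivalently $f=l_1 m^2$ for some monic $m$; there $T_f=|\mathcal{P}_{2g+1,q}|+O(1)$. For the off-diagonal, Weil's theorem alone gives only $T_f\ll d(fl)X^{1/2}/\log_q X$, which summed against $d_A(f)/\sqrt{|f|}$ is of size comparable to the main term and hence insufficient. To recover the true size I would invoke function-field Poisson summation on the $f$-sum over non-square $fl$ (in the spirit of \cite{Florea17,B&F20}), turning it into a dual sum of effective length $\sqrt{X/|l|}$ whose main contribution combines with the diagonal to supply the $L^2$ coefficient correctly, and whose tail is controlled via Weil bounds on Kloosterman sums over $\mathbb{F}_q[T]$.

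The evaluation of the diagonal rests on the generating identity
\[
\sum_{m\text{ monic}}d_A(l_1 m^2)u^{d(m)} = \frac{1-qu^2}{(1-qu)^3}\prod_{P\mid l_1}\frac{2}{1+u^{d(P)}},
\]
which has a triple pole at $u=q^{-1}$. Expressing the truncated $m$-sum as a contour integral and shifting past this pole produces a polynomial of degree three in $L:=\tfrac12\log_q(X/|l_1|)$; the triple residue, using $\prod_{P\mid l_1}2=d_A(l_1)$ and $1-q^{-1}=1/\zeta_q(2)$, delivers the leading $L^3$ coefficient $d_A(l_1)/(3\zeta_q(2))\cdot\prod_{P\mid l_1}(1+|P|^{-1})^{-1}$. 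Expanding one further order, logarithmic differentiation of $\prod_{P\mid l_1}2/(1+u^{d(P)})$ at $u=q^{-1}$ produces the arithmetic factor $\sum_{P\mid l_1}d(P)|P|^{-1}/(1+|P|^{-1})$ in the $L^2$ coefficient, while the constant $C(l)$ emerges from combining (i) the linear term of $1-qu^2$ at $u=q^{-1}$, (ii) the parity-dependent shift between $L$ and the bulk AFE cutoff $\lfloor(2g-1-d(l_1))/2\rfloor$, (iii) the boundary AFE term (present only when $d(l_1)$ is even), and (iv) the Poisson-dual main term.

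The principal obstacle is carrying out the Poisson-summation step and balancing it against the diagonal so as to produce exactly $C(l)=1+1/q$ when $d(l)$ is even versus $C(l)=2$ when $d(l)$ is odd; this requires careful bookkeeping because the four sources of $L^2$ contributions above vary with the parity of $d(l_1)$, and the even and odd cases are delicate in different ways. The claimed error $O(|l|^{\varepsilon}L^{3/2+\varepsilon}X/\log_q X)$ is assembled from (a) the shifted-contour remainder on $|u|=q^{-1/2+\varepsilon}$ for the diagonal, bounded via $d_A(l_1 m^2)\ll|l|^{\varepsilon}d_A(m)^2$ and Cauchy--Schwarz in $m$; (b) the Poisson-dual tail, controlled through Weil bounds on Kloosterman sums combined with mean-value estimates for $\sum_m d_A(l_1 m^2)^2$; and (c) negligible prime-power corrections from the identification of $\mathcal{P}_{2g+1,q}$ with square-free degree-$(2g+1)$ monics.
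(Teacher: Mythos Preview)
Your diagonal analysis is on target and matches the paper: the generating function
\[
\sum_{m} d_A(l_1 m^2)\, u^{d(m)} \;=\; d_A(l_1)\,\frac{\mathcal{Z}(u)^3}{\mathcal{Z}(u^2)}\prod_{P\mid l_1}\bigl(1+u^{d(P)}\bigr)^{-1}
\]
is exactly what is used, and the triple pole at $u=1$ yields the cubic polynomial in $L=\tfrac12\log_q(X/|l_1|)$ together with the logarithmic-derivative contribution $\sum_{P\mid l_1}\frac{d(P)/|P|}{1+1/|P|}$.

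The off-diagonal strategy, however, has a real gap. No Poisson summation is used, and for prime moduli it is not clear how it could be: one cannot Poisson-sum in the modulus variable over primes, and Poisson in the $f$-variable is nothing but the functional equation for $L(s,\chi_P)^2$, which is already the content of the approximate functional equation and gives nothing new. So the mechanism you propose for beating the $g^2 X/\log_q X$ error from direct Weil (which, as you correctly note, swallows the secondary $L^2$ terms) does not work as stated; your reference to \cite{B&F20} for this step is misplaced, since that paper does \emph{not} proceed via Poisson for the prime-moduli second moment.

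What the paper actually does (following \cite{B&F20}) is a contour-integral splitting. Express $S$ as $\oint \sum_P \mathcal{L}(u/\sqrt q,\chi_P)^2\chi_P(l)\,\frac{(1+u)\,du}{u^{2g+1}(1-u)}$ and decompose the kernel via
\[
\frac{1+u}{u^{2g+1}(1-u)} \;=\; \frac{(1+u)\,u^{2g-2Y}}{u^{2g+1}(1-u)} \;+\; \frac{(1+u)(1-u^{2g-2Y})}{u^{2g+1}(1-u)},\qquad Y=g-\lfloor 100\log g\rfloor.
\]
The first piece $S_1(Y)$ is the AFE truncated at degree $2Y$; direct Weil now gives error $\ll g(g+d(l))X^{1/2}q^{Y}/\log_q X$, negligible since $q^{Y}\ll X^{1/2}/g^{100}$. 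The second piece $S_2(Y)$ carries the factor $(1-u^{2g-2Y})/(1-u)$, of size $O(g-Y)=O(\log g)$ on $|u|=1$. One shifts to $|u|=1$ and splits the circle into a short arc $C_1$ of half-angle $\theta_1$ around $u=1$ and its complement $C_2$. On $C_2$ the key input is the second-moment upper bound
\[
\sum_{P\in\mathcal{P}_{2g+1,q}}\Bigl|\mathcal{L}\Bigl(\tfrac{e^{i\theta}}{\sqrt q},\chi_P\Bigr)\Bigr|^2 \;\ll\; \frac{X}{\log_q X}\,g^{1+\varepsilon}\min\Bigl(g,\tfrac{1}{\overline{2\theta}}\Bigr)^{2}
\]
from \cite{B&F20}. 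On $C_1$ one applies the AFE and Weil pointwise; the resulting $g(g+d(l))X/\log_q X$ error is multiplied by $(g-Y)\theta_1$, and with $\theta_1=(g-d(l_1)/2)^{-1/2}$ this gives the stated $L^{3/2+\varepsilon}$ error. The secondary $L^2$ coefficients---including the parity-dependent constant $C(l)$---arise by combining the residue in $S_1(Y)$ with the main term of $S_2(Y)$ on $C_1$, not from any Poisson-dual contribution.
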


   Our proof of Proposition \ref{Prop1} follows closely the approach in the proof of \cite[Theorem 1.1]{B&F20}. Notice that the error term appearing in \eqref{eq:1ndmomentE} is without power saving, so that the asymptotic formula given in \eqref{eq:1ndmomentE} is not strong enough for us to deploy in the lower bounds principle of W. Heap and K. Soundararajan to study the $k$-th moment of $\{ L(1/2, \chi_P) : P \in \mathcal{P}_{2g+1, q}\}$ (often needed for $0<k<1$).  We further note here that the $O$-term in \eqref{eq:1ndmomentE} can dominate over the main term if $\log g \gg \deg(l)$ does not hold.  Fortunately, the device developed by K. Soundararajan \cite{Sound2009} and its refinement by A. J. Harper \cite{Harper} concerning upper bounds of $L$-functions provides a resolution for this, since what one really needs here is an sharp upper bound for a mollified second moment.  Thus, we shall apply the Soundrarajan-Harper method to treat both the lower bounds for the $k$-th moment for $0<k<1$ and the upper bounds for all real $k \geq 0$ for both the families $\{ L(1/2, \chi_D) : D \in \mathcal{H}_{2g+1, q}\}$ and $\{ L(1/2, \chi_P) : P \in \mathcal{P}_{2g+1, q}\}$.  We further remark that even though the method of Soundararajan-Harper relies on GRH in the number fields case, our result in Theorem \ref{thmorderofmag} is unconditional since GRH is established over function fields. \newline

  We end this section by remarking that one may want to further extend our result with  mollifiers similar to the work of K. Soundararajan \cite{sound1} in the number field setting to establish a positive proportion non-vanishing result for the corresponding $L$-functions at the central point. In fact, much progress has been made in the function field setting.  In \cite{BF18}, H. M.  Bui and A. Florea computed the one-level density of the family $\{ L(1/2, \chi_D) | D \in \mathcal{H}_{2g+1, q}\}$ and showed that at least $94\%$ of the members of this family do not vanish. Using a geometric argument, W. Li \cite{Li18} showed that infinitely many of $L(1/2, \chi_D) = 0$.  Furthermore, J. S. Ellenberg, W. Li and M. Shusterman \cite{ELS} obtained an upper bound that tends to $0$ as $q \rightarrow \infty$ on the proportion of $L(1/2, \chi_D) = 0$ . Their result also implies that $L(1/2, \chi_P) \neq 0$ for any prime $P \in \mathcal{P}_{2g+1, q}$. % In \cite{AB}, J. C. Andrade and C. G. Best showed that for $100\%$ members of $D \in \mathcal{H}_{2g+1, q}$, $L(1/2, \chi_D) \neq 0$.

\section{Preliminaries}
\label{sec 2}

\subsection{Backgrounds on function fields}
\label{sec 2.1}

 In this section, we collect some basic facts about function fields, most of which can be found in \cite{Rosen02}. Recall that we write $A$ for the ring $\mathbb{F}_{q}[T]$, $\mathcal{M}$ for the set of monic polynomials in $A$, $\mathcal{M}_n$ for the set of
monic polynomials of degree $n$ in $A$ and $\mathcal{M}_{\leq n}$ the set of monic polynomials of degree less than or equal to $n$.
 Moreover, $P$ denotes a prime in $A$, by which we mean that $P$ is monic and irreducible.  Let $\pi_A(n)$ stand for the number of
such primes of degree $n$. We have the following results on these primes.
\begin{lemma}
\label{RS}
  We have
\begin{equation}
\label{ppt}
\pi_A(n) = \frac{q^n}{n}+O \Big(  \frac{q^{n/2}}{n} \Big).
\end{equation}
   Also, for $x \geq 2$, we have
\begin{align}
\label{logp}
\sum_{|P| \le x} \frac {\log |P|}{|P|} =& \log x + O(1)  \quad \mbox{and} \\
\label{lam2p}
\sum_{|P| \le x} \frac{1}{|P|} =& \log \log x + b+ O \left( \frac {1}{\log x} \right),
\end{align}
where $b$ is an absolute constant.
\end{lemma}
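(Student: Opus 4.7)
The three estimates are standard function-field analogues of the prime number theorem and Mertens' theorems, so I would prove them by first establishing the prime polynomial theorem and then deriving the two sum estimates by partial summation.

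For \eqref{ppt}, the plan is to start from the zeta function identity
\[
Z_A(u) \;=\; \sum_{f\in\mathcal{M}} u^{d(f)} \;=\; \frac{1}{1-qu} \;=\; \prod_P \bigl(1-u^{d(P)}\bigr)^{-1},
\]
take the logarithmic derivative of the Euler product, and equate coefficients with the expansion of $qu/(1-qu)$. This yields the exact identity
\[
\sum_{d \mid n} d\,\pi_A(d) \;=\; q^n,
\]
whence by Möbius inversion $n\pi_A(n) = \sum_{d\mid n} \mu(n/d)\,q^d$. Isolating the $d=n$ contribution and bounding the rest trivially by $\sum_{d \mid n,\, d\le n/2} q^d \ll q^{n/2}$ produces the stated error term.

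For \eqref{logp}, I would simply sort the sum by degree and plug in \eqref{ppt}:
\[
\sum_{|P|\le x}\frac{\log |P|}{|P|} \;=\; \sum_{n\le \log_q x} \frac{n\log q}{q^n}\,\pi_A(n) \;=\; \log q\!\!\sum_{n\le \log_q x}\!\bigl(1+O(q^{-n/2})\bigr),
\]
and the geometric tail contributes $O(1)$ while the main sum gives $\log q \cdot \lfloor \log_q x\rfloor = \log x + O(1)$.

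For \eqref{lam2p}, I would apply Abel summation with \eqref{logp} playing the role of the Chebyshev-type input: setting $T(y) = \sum_{|P|\le y} \log|P|/|P| = \log y + E(y)$ with $E(y)=O(1)$,
\[
\sum_{|P|\le x} \frac{1}{|P|} \;=\; \sum_{|P|\le x} \frac{1}{\log|P|}\cdot\frac{\log|P|}{|P|}
\]
can be rewritten via partial summation (grouping primes by degree, since $\log|P|=d(P)\log q$ is constant on each degree class) as $\sum_{n\le \log_q x} (1/n) + O(q^{-n/2})$, giving $\log\log_q x + b_0 + O(1/\log x)$ for a suitable constant $b_0$; absorbing $\log\log q$ into the constant $b$ gives the claim. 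Essentially no genuine obstacle arises; the only care needed is to keep the error terms from \eqref{ppt} small enough that the geometric series in $q^{-n/2}$ is absorbed into the $O(1/\log x)$ remainder, which it clearly is since $\sum_{n>\log_q x}q^{-n/2} \ll x^{-1/2}$.
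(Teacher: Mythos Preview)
Your proposal is correct and follows essentially the same route as the paper: the paper cites Rosen for \eqref{ppt} (which you instead prove directly via the standard M\"obius-inversion argument), derives \eqref{logp} exactly as you do by sorting primes by degree and inserting \eqref{ppt}, and obtains \eqref{lam2p} from \eqref{logp} via the partial-summation argument of Montgomery--Vaughan \cite[Theorem~2.7(d)]{MVa1}. Your derivation of \eqref{lam2p} drifts from the announced Abel summation into a direct substitution of \eqref{ppt} into $\sum_n \pi_A(n)/q^n$, but that works just as well and is arguably cleaner here.
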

\begin{proof}
  The assertion in \eqref{ppt} is the Prime Polynomial Theorem \cite[Theorem 2.2]{Rosen02}. We apply \eqref{ppt} to see that
\begin{align*}
\sum_{|P| \le x} \frac{\log_q |P|}{|P|} = \sum_{n \le \log_q x} \frac{n \pi_A(n)}{q^n} = \sum_{n \le \log_q x} 1+O\Big(\sum_{n \le \log_q x}
\frac{1}{q^{n/2}} \Big)= \log_q x+O ( 1 ).
\end{align*}
Now \eqref{logp} follows easily from the above computation. Then \eqref{lam2p} is obtained using \eqref{logp} and the
approach used in the proof of \cite[Theorem 2.7, (d)]{MVa1}. This completes the proof of the lemma.
\end{proof}

  We define the zeta function $\zeta_{A}(s)$ associated to $A$ for $\Re(s)>1$ by
\begin{equation*}
%%\label{2.1}
\zeta_{A}(s)=\sum_{\substack{f\in A}}\frac{1}{|f|^{s}}=\prod_{P}(1-|P|^{-s})^{-1},
\end{equation*}
  where we recall our convention that the sum over $f$ is restricted to monic $f \in A$.  Since there are $q^n$ monic polynomials of degree
$n$, it follows that
\begin{equation*}
%%\label{2.2}
\zeta_{A}(s)=\frac{1}{1-q^{1-s}}.
\end{equation*}
  The above expression then defines $\zeta_{A}(s)$ on the entire complex plane with a simple pole at $s = 1$. We shall often write
$\zeta_{A}(s)=\mathcal{Z}(u)$ via a change of variables $u=q^{-s}$, yielding
\begin{equation*}
\mathcal{Z}(u)=\prod_{P}(1-u^{\deg(P)})^{-1}=(1-qu)^{-1}.
\end{equation*}

 We define the quadratic character $\leg {\cdot}{P}$ for any prime $P \in A$ by
\begin{equation*}
%%\label{2.4}
\leg {f}{P}=\Bigg\{
\begin{array}{cl}
0, & \mathrm{if}\ P \mid f,\\
1, & \mathrm{if}\ P \nmid f \ \mathrm{and} \ f \ \mathrm{is \ a \ square \ modulo} \ P,\\
-1, & \mathrm{if}\ P\nmid f \ \mathrm{and} \ f \ \mathrm{is \ a \ non\ square \ modulo} \ P.\\
\end{array}
\end{equation*}
The above definition can then be extended multiplicatively to $\leg {\cdot}{D}$ for any non-zero monic $D \in A$.
  The quadratic reciprocity law asserts that for non-zero monic $C, D \in A$ such that $(C, D)=1$, we have
\begin{equation*}
 \leg {C}{D}  = \leg{D}{C}(-1)^{\deg(C)\deg(D)(q-1)/2 }.
\end{equation*}

   We further define the quadratic character $\chi_{D}$ to be $\leg {D}{\cdot}$ and the $L$-function associated to
$\chi_{D}$ for $\Re(s)>1$ to be
\begin{equation*}
%%\label{2.5}
L(s,\chi_{D})=\sum_{\substack{f\in A}}\frac{\chi_{D}(f)}{|f|^{s}}=\prod_{P} \left( 1-\chi_{D}(P)|P|^{-s} \right)^{-1}.
\end{equation*}

  Similar to the function $\mathcal{Z}(u)$, we have via the change of variables $u=q^{-s}$,
$$ \mathcal{L}(u,\chi_D) = \sum_{f \in A} \chi_D(f) u^{\deg(f)} = \prod_P \left( 1-\chi_D(P) u^{\deg(P)} \right)^{-1}.$$

\subsection{Necessary Tools}

In this section, we gather some auxiliary results that will be used in the proof of our results. For a monic $f \in A$ and any integer $k \geq 1$, we define the $k$-th divisor function $d_{k,A}(f)$ by
\begin{equation} \label{ddef}
d_{k,A}(f) = \displaystyle \sum_{f_1 \cdot \ldots \cdot f_k =f} 1,
\end{equation}
where $f_i$ are monic polynomials in $A$. In particular, we note that $d_{1, A}(f)=1$ and $d_{2, A}(f)=d_A(f)$, the divisor function on $A$.
We have the following approximate functional equation, established in the proof of \cite[Lemma 2.1]{Florea17-2}.
 \begin{lemma}
\label{afe}
Let $u=e^{i \theta}$ with $\theta \in [0,2\pi)$. Then for any $D \in \mathcal{H}_{2g+1, q}$, we have for any integer $k \geq 1$,
 \begin{align*}
\mathcal{L} \Big( \frac{u}{\sqrt{q}},\chi_D \Big)^k &= \sum_{f \in \mathcal{M}_{\leq kg}} \frac{d_{A,k}(f)\chi_D(f)u^{\deg(f)} }{\sqrt{|f|}}+u^{2kg} \sum_{f \in \mathcal{M}_{\leq kg-1}} \frac{d_{A,k}(f)\chi_D(f) }{\sqrt{|f|}u^{\deg(f)}}.
 \end{align*}
 \end{lemma}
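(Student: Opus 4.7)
The plan is to derive the approximate functional equation directly from the genuine functional equation of $\mathcal{L}(u,\chi_D)$ combined with the fact that $\mathcal{L}(u,\chi_D)^{k}$ is a polynomial. First, I would recall that for $D\in\mathcal{H}_{2g+1,q}$ the character $\chi_D$ is primitive of conductor $D$ with $d(D)=2g+1$, so $\mathcal{L}(u,\chi_D)$ is a polynomial in $u$ of degree $2g$ satisfying the functional equation
\[
\mathcal{L}(u,\chi_D)=(qu^{2})^{g}\,\mathcal{L}\!\left(\tfrac{1}{qu},\chi_D\right).
\]
Raising to the $k$th power gives the polynomial identity $\mathcal{L}(u,\chi_D)^{k}=(qu^{2})^{kg}\mathcal{L}(1/(qu),\chi_D)^{k}$ of degree at most $2kg$.

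Next, I would read off the polynomial coefficients: since formally $\mathcal{L}(u,\chi_D)^{k}=\sum_{f}d_{A,k}(f)\chi_D(f)u^{d(f)}$ and the left-hand side is a polynomial of degree $\leq 2kg$, one obtains
\[
\mathcal{L}(u,\chi_D)^{k}=\sum_{n=0}^{2kg}u^{n}\sum_{f\in\mathcal{M}_n}d_{A,k}(f)\chi_D(f).
\]
Substituting $u\mapsto u/\sqrt{q}$ and setting $a_n:=\sum_{f\in\mathcal{M}_n}d_{A,k}(f)\chi_D(f)/\sqrt{|f|}$ converts this into $\mathcal{L}(u/\sqrt{q},\chi_D)^{k}=\sum_{n=0}^{2kg}a_n u^{n}$.

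The key algebraic step is to extract from the functional equation the palindromic symmetry $a_n=a_{2kg-n}$. Comparing coefficients of $u^{n}$ on both sides of $\mathcal{L}(u,\chi_D)^{k}=(qu^{2})^{kg}\mathcal{L}(1/(qu),\chi_D)^{k}$ produces $\sum_{f\in\mathcal{M}_n}d_{A,k}(f)\chi_D(f)=q^{n-kg}\sum_{f\in\mathcal{M}_{2kg-n}}d_{A,k}(f)\chi_D(f)$; dividing through by $q^{n/2}$ is then exactly $a_n=a_{2kg-n}$.

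Finally, I would split the polynomial at the midpoint $n=kg$: writing $\sum_{n=0}^{2kg}a_n u^n=\sum_{n=0}^{kg}a_n u^n+\sum_{n=kg+1}^{2kg}a_n u^n$ and reindexing $n=2kg-m$ in the second sum, the symmetry converts it into $u^{2kg}\sum_{m=0}^{kg-1}a_m u^{-m}$. Substituting back the definition of $a_m$ recovers the statement of the lemma exactly. The only mildly delicate point is the asymmetric cutoff ($\leq kg$ in the first sum versus $\leq kg-1$ in the second), which simply reflects that the midpoint term $n=kg$ must not be double-counted; no serious obstacle is anticipated, since all inputs (polynomial structure of $\mathcal{L}$, the functional equation, and $d_{A,k}$ arising as a Dirichlet convolution) are standard for the function-field quadratic family.
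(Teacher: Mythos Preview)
Your proposal is correct and is exactly the standard derivation: the paper does not give its own proof but cites \cite{Florea17-2}, and the argument there proceeds precisely as you outline, using the functional equation $\mathcal{L}(u,\chi_D)=(qu^{2})^{g}\mathcal{L}(1/(qu),\chi_D)$ raised to the $k$th power, the resulting palindromic symmetry $a_n=a_{2kg-n}$, and the split at $n=kg$.
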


   Our next result is a direct consequence of \cite[Theorem 2.4]{BF18-1} on twisted first moment of $\{ L(1/2, \chi_D) : D \in \mathcal{H}_{2g+1, q}\}$.
\begin{proposition}
\label{Prop-twisted1stmoment}
  Writing any monic $l \in A$ uniquely as $l=l_1l^2_2$ where $l_1, l_2$ are monic polynomials with $l_1$ monic and
square-free, we have for any $\varepsilon>0$,
\begin{align*}
%%\label{eq:1ndmoment}
\begin{split}
 \sum_{D\in\mathcal{H}_{2g+1,q}}L \left( \frac{1}{2},\chi_{D} \right)\chi_{D}(l)
=&  \frac{C }{\zeta_A(2)}\frac {X}{\sqrt{|l_1|}g(l)} \Big (\log_q \Big ( \frac {\sqrt{X}}{|l_1|}\Big )+C_1+\sum_{\substack{P | l}} \frac
{C_1(P)}{|P|}\log_q |P| \Big )+O \left(X^{7/8+\varepsilon}|l|^{1/4+\varepsilon}\right ),
\end{split}
\end{align*}
where $C_1$ is an absolute constant,
\[ C= \displaystyle \prod_{\substack{P}}\left (1-\frac 1{|P|(|P|+1)} \right ), \; g(l)=\displaystyle \prod_{P | l}\Big ( \frac {|P|+1}{|P|} \Big )\left (1-\frac 1{|P|(|P|+1)} \right ) \; \mbox{and} \; C_1(P) \ll \frac{1}{|P|} \]
for all $P$.
\end{proposition}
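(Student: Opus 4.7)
The strategy is to invoke the twisted first moment formula of Bui--Florea \cite[Theorem 2.4]{BF18-1}, which yields an asymptotic of the same shape but phrased for a \emph{square-free} twisting argument, and then to bridge from that result to general monic $l$ by handling the square part of $l$. Concretely, \cite[Theorem 2.4]{BF18-1} supplies an expansion
\begin{align*}
\sum_{D \in \mathcal{H}_{2g+1,q}} L(\tfrac{1}{2},\chi_D)\chi_D(h) = \mathrm{MT}(h) + O\left( X^{7/8+\varepsilon} |h|^{1/4+\varepsilon} \right)
\end{align*}
for square-free monic $h$, where $\mathrm{MT}(h)$ has an explicit Euler-product shape in $h$ with a leading $\log_q(\sqrt{X}/|h|)$ factor and a secondary term supported on the primes dividing $h$. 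The task is then to align this with the statement of Proposition~\ref{Prop-twisted1stmoment}.

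Next I would reduce the general-$l$ sum to the square-free case. Since every $D \in \mathcal{H}_{2g+1,q}$ is itself square-free, the identity $\chi_D(l) = \chi_D(l_1)\chi_D(l_2)^2 = \chi_D(l_1)\cdot \mathbf{1}_{(D,l_2)=1}$ holds. Writing the coprimality indicator by M\"obius inversion as $\mathbf{1}_{(D,l_2)=1} = \sum_{e \mid (D,l_2)}\mu(e)$ and parametrising $D = eD'$ (so that $D'$ is square-free and coprime to $e$), the target sum decomposes into a M\"obius-weighted sum over monic divisors $e \mid l_2$ of inner sums of the shape handled by \cite[Theorem 2.4]{BF18-1} applied to the square-free argument $l_1$, together with a residual coprimality constraint $(D',e)=1$ which can be removed by a second, short M\"obius step.

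Finally I would reassemble the main terms. Since the M\"obius-weighted $e$-sum is multiplicative in the prime factorisation of $l_2$, each local factor at a prime $P \mid l$ combines with the corresponding local factor of $\mathrm{MT}(l_1)$ to reproduce precisely the Euler factor of $g(l)^{-1}$ at $P$ together with the local contribution $C_1(P)|P|^{-1}\log_q|P|$ that appears inside the bracket in the stated main term; the overall scaling $C/\zeta_A(2)$ is inherited directly from \cite[Theorem 2.4]{BF18-1}. The error term survives trivially, since $|l|^{1/4+\varepsilon} \geq |l_1|^{1/4+\varepsilon}$ and the number of monic divisors of $l_2$ is absorbed into the $|l|^{\varepsilon}$ factor. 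The main obstacle is the local bookkeeping at primes $P \mid l_2$ with $P \nmid l_1$: one must verify that the M\"obius inversion over $e \mid l_2$ manufactures exactly the correct local Euler factor of $g(l)$ and exactly the expected contribution to $\sum_{P\mid l}\tfrac{C_1(P)\log_q|P|}{|P|}$. Because both $g(l)$ and that correction sum are multiplicative in $l$, this reduces to a prime-by-prime verification, but the algebra is delicate enough that one has to track carefully the interplay between $\chi_e(l_1)$, the local factors of $\zeta_A(2)^{-1}$, and the secondary terms carried over from \cite[Theorem 2.4]{BF18-1}.
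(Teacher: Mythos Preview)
The paper does not supply a proof of this proposition at all: it is introduced with the sentence ``Our next result is a direct consequence of \cite[Theorem 2.4]{BF18-1}'' and then simply stated. In other words, Bui--Florea's Theorem~2.4 already gives the twisted first moment for a \emph{general} monic twist $l$ (this is what one expects, since that paper computes twisted moments precisely to feed into a mollifier, which requires arbitrary polynomial twists), and Proposition~\ref{Prop-twisted1stmoment} is just a repackaging of their main and error terms. Your premise that \cite[Theorem 2.4]{BF18-1} is restricted to square-free $h$, and hence that a reduction step is needed, appears to be mistaken.

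More seriously, the M\"obius reduction you sketch would not work as written even if such a reduction were required. After writing $D=eD'$ with $e\mid l_2$, the summand involves $L(\tfrac12,\chi_{eD'})$, not $L(\tfrac12,\chi_{D'})$. Since $\chi_{eD'}(f)=\chi_e(f)\chi_{D'}(f)$, the inner sum
\[
\sum_{\substack{D'\in\mathcal{H}_{2g+1-d(e),q}\\ (D',e)=1}} L\Big(\tfrac12,\chi_{eD'}\Big)\,\chi_{eD'}(l_1)
\]
is \emph{not} of the shape ``$\sum_{D'} L(\tfrac12,\chi_{D'})\chi_{D'}(h)$ for square-free $h$'' that you claim to feed back into \cite[Theorem 2.4]{BF18-1}; the extra character $\chi_e$ sits inside the $L$-function and does not factor out. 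To evaluate this you would have to reopen the approximate functional equation and redo the diagonal/off-diagonal analysis with the additional twist by $\chi_e$, which amounts to reproving the general-$l$ case from scratch rather than deducing it from the square-free case. So the claimed black-box reduction has a genuine gap, but the point is moot since no reduction is needed.
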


The $O$-term in Proposition~\ref{Prop-twisted1stmoment} is weaker than that in \cite[Theorem 2.4]{BF18-1}.  But it is sufficient for our purpose here.  Now writing $f=\square$ if $f \in A$ is a perfect square, we have the following character sum results given in \cite[Lemma 4.1]{Andrade16}.
\begin{lemma}
\label{lem3}
Writting $X=q^{2g+1}$, we have
\begin{equation*}
%%\label{4.9}
\sum_{\substack{D\in\mathcal{H}_{2g+1,q} }}\left(\frac{D}{f}\right)=\delta_{f=\square}\Big (\frac{X}{\zeta_{A}(2)} \Big )\prod_{\substack{P\mid
f}}\left(\frac{|P|}{|P|+1}\right)+O\left(X^{1/2}|f|^{1/4}\right).
\end{equation*}
\end{lemma}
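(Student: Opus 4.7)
The plan is to detect squarefreeness via the identity $\mu^2(D)=\sum_{B^2\mid D}\mu(B)$. Writing $D = B^2 E$ and noting that $\leg{B^2}{f}$ equals $1$ when $(B,f)=1$ and $0$ otherwise, we obtain
\begin{equation*}
\sum_{D\in\mathcal{H}_{2g+1,q}}\leg{D}{f} \;=\; \sum_{\substack{B\in\mathcal{M},\; d(B)\le g\\ (B,f)=1}}\mu(B)\,T_f(2g+1-2d(B)), \quad\text{where}\quad T_f(n) := \sum_{E\in\mathcal{M}_n}\leg{E}{f}.
\end{equation*}
This reduces the problem to understanding $T_f(n)$, which behaves rather differently depending on whether $f$ is a perfect square.

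If $f=\square$, then $\leg{\cdot}{f}$ is the principal character modulo the radical of $f$, and the generating identity $\sum_n T_f(n) u^n = (1-qu)^{-1}\prod_{P\mid f}(1-u^{d(P)})$ yields $T_f(n) = q^n\prod_{P\mid f}(1-|P|^{-1})$ for all $n$ at least the degree of the radical. Substituting, extending the outer sum to all monic $B$ coprime to $f$, and evaluating $\sum_{(B,f)=1}\mu(B)q^{-2d(B)} = \prod_{P\nmid f}(1-|P|^{-2}) = \zeta_A(2)^{-1}\prod_{P\mid f}(1-|P|^{-2})^{-1}$ produces, after combining the Euler factors via $(1-|P|^{-1})/(1-|P|^{-2}) = |P|/(|P|+1)$, the claimed main term $X\zeta_A(2)^{-1}\prod_{P\mid f}|P|/(|P|+1)$. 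The missing pieces (tail ranges of $d(B)$ and the small values of $n$ where the asymptotic for $T_f$ fails) are bounded trivially and absorbed into the error.

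If $f \neq \square$, write $f = f_1 f_2^2$ with $f_1$ squarefree and nontrivial, so that $\leg{\cdot}{f}$ is a nontrivial character. The Riemann Hypothesis for curves over $\mathbb{F}_q$ (Weil) tells us that $\mathcal{L}(u, \leg{\cdot}{f_1}) = \prod_j(1-\alpha_j u)$ is a polynomial of degree $d(f_1)-1$ with $|\alpha_j| = \sqrt{q}$, which, after an inclusion–exclusion in $f_2$, yields coefficient bounds on $T_f(n)$ of size $q^{n/2}$ times a polynomial factor in $d(f)$. The main obstacle is sharpening the dependence on $f$ in the error from the naive $2^{d(f)}$ estimate one gets by summing the binomial bound down to the asserted $|f|^{1/4}$. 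To do this I would split the $B$-sum at a parameter $Y \asymp d(f)/2$: for $d(B) \leq Y$ apply the Weil coefficient bound directly, exploiting that $T_f(n) = 0$ once $n \geq d(f_1)$; for $d(B) > Y$ invoke Poisson summation over monic polynomials of fixed degree in $\mathbb{F}_q[T]$ (or equivalently, use quadratic reciprocity to flip $\leg{E}{f}$ into $\leg{f}{E}$ and estimate the resulting short character sums modulo $E$), which produces a dual sum of length comparable to $|f|/q^n$ that can be bounded cleanly. Balancing the two ranges then delivers the asserted $O(X^{1/2}|f|^{1/4})$ error.
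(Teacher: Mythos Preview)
The paper does not actually prove this lemma: it simply quotes the statement from \cite[Lemma 4.1]{Andrade16}. Your sketch is essentially the standard argument behind that citation --- M\"obius detection of squarefreeness, then a separate treatment of the principal ($f=\square$) and non-principal ($f\neq\square$) cases --- so in that sense you have supplied exactly what the paper omits.

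Two technical corrections are worth noting. First, for $f=f_1 f_2^2$ the imprimitive $L$-function $\mathcal{L}(u,\leg{\cdot}{f})$ has degree $d(\mathrm{rad}(f))-1$, not $d(f_1)-1$: the primes $P\mid f_2$, $P\nmid f_1$ contribute extra Euler factors $(1-\leg{P}{f_1}u^{d(P)})$. So the correct vanishing statement is $T_f(n)=0$ for $n\ge d(\mathrm{rad}(f))$. Second, your stated cutoff ``$Y\asymp d(f)/2$'' in the $B$-variable is not the right one: with only the vanishing of $T_f(n)$ for large $n$ and the trivial bound $|T_f(n)|\le q^n$ for small $n$, one gets error $X^{1/2}|f|^{1/2}$, not $|f|^{1/4}$. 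The sharpening to $|f|^{1/4}$ comes from combining the trivial bound with the functional-equation (Poisson) bound $|T_f(n)|\ll |f|^{1/2}q^{-1/2}$, valid uniformly in $n$, and switching between the two at $n\approx d(f)/2$ --- equivalently, splitting the $B$-sum near $d(B)\approx g-d(f)/4$. That is precisely the balancing you allude to in your final sentence, so the overall plan is sound; only the bookkeeping of the cutoff needs adjusting.
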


   We also note the following result for character sums over primes.
\begin{lemma}
\label{PVlemma}
  Recall that $X=q^{2g+1}$. We have, for $f$ not a perfect square,
\begin{align}
\label{Psumfnonsquare}
\sum_{ P \in \mathcal{P}_{2g+1,q}} \chi_P(f) \ll \frac {X^{1/2}}{\log_q X}  \deg(f).
\end{align}
 Also, for any $\varepsilon>0$,
\begin{align}
\label{wsum}
\sum_{\substack{P\in\mathcal{P}_{2g+1,q}}}\left(\frac{P}{f }\right)=  \delta_{f=\square}\Big (\frac {X}{\log_q X}\Big )+O \left( X^{1/2+\varepsilon}|f|^{\varepsilon} \right).
\end{align}
 \end{lemma}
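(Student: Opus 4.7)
The plan is to reduce both bounds to the function field Riemann hypothesis (Weil's theorem) applied to the $L$-function $\mathcal{L}(u, \chi_f)$. Since $q \equiv 1 \pmod 4$, the sign in quadratic reciprocity is trivial, so $\chi_P(f) = \chi_f(P)$ for monic coprime $P, f \in A$; primes $P \in \mathcal{P}_{2g+1,q}$ dividing $f$ contribute $\chi_P(f) = 0$ and number at most $d(f)/(2g+1)$, so are negligible for the error terms in question. Thus both parts reduce to estimating $\sum_{P \in \mathcal{P}_{2g+1,q}} \chi_f(P)$.

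For \eqref{Psumfnonsquare}, write $f = f_1 f_2^2$ with $f_1$ square-free and $d(f_1) \geq 1$ (since $f \neq \square$). Then $\mathcal{L}(u, \chi_f)$ is a polynomial in $u = q^{-s}$ of degree $D \leq d(f)$, and the function field Riemann hypothesis furnishes a factorization $\mathcal{L}(u, \chi_f) = \prod_{j=1}^{D}(1 - \alpha_j u)$ with $|\alpha_j| \leq \sqrt{q}$. Taking $u \tfrac{d}{du} \log$ of both this form and the Euler product, and comparing coefficients of $u^n$ at $n = 2g+1$, yields
\begin{equation*}
\sum_{k\, d(P) = n} d(P)\, \chi_f(P)^k = -\sum_{j=1}^{D} \alpha_j^n.
\end{equation*}
Since $n = 2g+1$ is odd, the prime-power contributions ($k \geq 2$) force $k \geq 3$, whose total is $\sum_{k \geq 3,\, k \mid n} q^{n/k} \ll q^{n/3}$. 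Isolating the $k=1$ term, dividing by $n$, and invoking $|\alpha_j| \leq \sqrt{q}$ gives $|\sum_{d(P) = n} \chi_f(P)| \leq Dq^{n/2}/n + O(q^{n/3}) \ll d(f) X^{1/2}/\log_q X$, which is \eqref{Psumfnonsquare}.

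For \eqref{wsum}, the case $f \neq \square$ follows immediately from \eqref{Psumfnonsquare} since $d(f) \ll |f|^{\varepsilon}$ is absorbed by the larger $X^{1/2+\varepsilon}|f|^\varepsilon$ error. When $f = \square$, say $f = h^2$, we have $\chi_P(f) = 1$ if $P \nmid h$ and $0$ otherwise, so
\begin{equation*}
\sum_{P \in \mathcal{P}_{2g+1,q}} \chi_P(f) = \pi_A(2g+1) + O\Big(\tfrac{d(h)}{2g+1}\Big),
\end{equation*}
and \eqref{ppt} delivers $X/\log_q X + O(X^{1/2}/\log_q X)$, stronger than the claimed error. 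The only genuine technicality is verifying that the degree bound $D \leq d(f)$ and the Weil bound $|\alpha_j| \leq \sqrt{q}$ persist when $\chi_f$ is imprimitive ($f_2 \neq 1$): the imprimitive $L$-function differs from the primitive one modulo $f_1$ by a finite Euler product of degree $\leq d(f_2)$ whose roots are roots of unity of absolute value $1 \leq \sqrt{q}$, so no part of the argument is disturbed. This is routine rather than a substantive obstacle, and the proof goes through.
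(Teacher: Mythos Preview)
Your proof is correct and follows essentially the same approach as the paper: the paper simply cites the Weil bound from \cite{Rudnick10} for \eqref{Psumfnonsquare} and notes that \eqref{wsum} then follows from \eqref{ppt} together with \eqref{Psumfnonsquare}. You have spelled out the Weil-bound step explicitly (via the logarithmic derivative of $\mathcal{L}(u,\chi_f)$ and the observation that $n=2g+1$ odd kills the $k=2$ prime-power contribution), which is exactly what lies behind the citation.
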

\begin{proof}
   The estimation given in \eqref{Psumfnonsquare} is essentially the Weil bound established in \cite[(2.5)]{Rudnick10}, by taking also \eqref{ppt} into account. The estimation given in \eqref{wsum} is an easy consequence of \eqref{ppt} and \eqref{Psumfnonsquare}.
\end{proof}

   We have the following analogue of Perron’s formula in function fields, which is an easy consequence of Cauchy's residue theorem  (see \cite[(2.6)]{Florea17-2}).
\begin{lemma}
\label{lem4}
 Suppose that the power series $\sum^{\infty}_{n=0} a(n)u^n$ is absolutely convergent in $|u| \leq  r < 1$, then for $N \geq 0$,
\begin{align}
\label{perron1}
\begin{split}
 \sum_{n \leq N}a(n)=& \frac 1{2\pi i}\oint\limits_{|u|=r} \Big ( \sum^{\infty}_{n=0}a(n)u^n\Big )\frac {\dif u}{(1-u)u^{N+1}}.
\end{split}
\end{align}
\end{lemma}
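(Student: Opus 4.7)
The plan is to deduce the formula directly from Cauchy's residue theorem, viewing the integrand as a meromorphic function whose only singularity inside the contour $|u|=r$ is a pole of order $N+1$ at $u=0$. First I would write $F(u) = \sum_{n=0}^\infty a(n) u^n$ and observe that, since $r < 1$, the factor $\frac{1}{1-u}$ is holomorphic in a neighborhood of the closed disk $|u| \leq r$, while $u^{-(N+1)}$ accounts for the unique pole inside the contour. By absolute convergence of $F(u)$ on $|u| \leq r$ and absolute convergence of the geometric series $\frac{1}{1-u} = \sum_{m=0}^{\infty} u^m$ on the same disk, the product
\[ \frac{F(u)}{1-u} = \sum_{k=0}^{\infty} \Bigl( \sum_{\substack{n+m=k \\ n,m \geq 0}} a(n) \Bigr) u^k = \sum_{k=0}^{\infty} \Bigl( \sum_{n=0}^{k} a(n) \Bigr) u^k \]
defines a holomorphic function on $|u| \leq r$.

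Next I would divide by $u^{N+1}$ and integrate term-by-term (justified by uniform convergence on $|u|=r$), using the standard identity $\frac{1}{2\pi i} \oint_{|u|=r} u^{j} \, \dif u = \delta_{j=-1}$. Only the term with $k=N$ in the expansion above contributes, giving
\[ \frac{1}{2\pi i}\oint_{|u|=r} \frac{F(u)}{(1-u)u^{N+1}} \, \dif u = \sum_{n=0}^{N} a(n), \]
which is exactly $\sum_{n \leq N} a(n)$. Equivalently, one may invoke Cauchy's residue theorem: the residue at $u=0$ of $\frac{F(u)}{(1-u)u^{N+1}}$ is the coefficient of $u^{N}$ in the Taylor expansion of $\frac{F(u)}{1-u}$ around $u=0$, which by the Cauchy product formula above equals $\sum_{n=0}^{N} a(n)$.

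There is no real obstacle here; the only point requiring a line of care is justifying the interchange of the (double) sum with the contour integral, and this is immediate from the absolute and uniform convergence of both $\sum_n a(n) u^n$ and $\sum_m u^m$ on the compact set $|u|=r < 1$.
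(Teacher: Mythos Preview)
Your proposal is correct and matches the paper's approach: the paper does not spell out a proof but simply states that the lemma ``is an easy consequence of Cauchy's residue theorem,'' which is exactly the argument you have given (either via the residue at $u=0$ or, equivalently, term-by-term integration of the Cauchy product $F(u)/(1-u)$).
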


   We end this section by including an expression for the twisted second moment of $\{ L(1/2, \chi_P) : P \in \mathcal{P}_{2g+1, q}\}$.
\begin{lemma}
\label{lem:Ltwistsbound}
 For $|u|=1$, we have
\begin{align}
\label{Lbounds}
\begin{split}
 \sum_{P \in \mathcal{P}_{2g+1,q}}  \mathcal{L} \Big( \frac{u}{\sqrt{q}}, \chi_P \Big)^2\chi_P(l)  =  \frac {X}{\log_q X} & \Big (\sum_{\substack{f \in \mathcal{M}_{\leq 2g} \\ fl=\square}} \frac{ d_{A}(f)u^{\deg(f)}}{\sqrt{|f|}}  + u^{4g} \sum_{\substack{f \in \mathcal{M}_{\leq 2g-1} \\ fl=\square}} \frac{ d_{A}(f)}{\sqrt{|f|} u^{\deg(f)}}\Big )\\
&+O \left( \frac {g(g+\deg(l))X}{\log_q X} + |l|^{\varepsilon}X^{1/2+\varepsilon} \right).
\end{split}
\end{align}
 \end{lemma}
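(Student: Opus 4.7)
The plan is to apply the approximate functional equation of Lemma~\ref{afe} with $k=2$, noting that $\mathcal{P}_{2g+1,q}\subset\mathcal{H}_{2g+1,q}$, to expand each $\mathcal{L}(u/\sqrt{q},\chi_P)^2$ as
$$\sum_{f\in\mathcal{M}_{\leq 2g}}\frac{d_A(f)\chi_P(f)u^{d(f)}}{\sqrt{|f|}}\;+\;u^{4g}\sum_{f\in\mathcal{M}_{\leq 2g-1}}\frac{d_A(f)\chi_P(f)}{\sqrt{|f|}\,u^{d(f)}}.$$
After multiplying by $\chi_P(l)$ and using the complete multiplicativity $\chi_P(f)\chi_P(l)=\chi_P(fl)$, I would swap the order of summation so that the problem reduces to analysing $\sum_{P\in\mathcal{P}_{2g+1,q}}\chi_P(fl)$ for each fixed $f$.

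I would then split into two cases according to whether $fl$ is a perfect square. When $fl=\square$, the estimate \eqref{wsum} gives $\sum_P\chi_P(fl)=X/\log_q X+O(X^{1/2+\varepsilon}|fl|^\varepsilon)$. The leading $X/\log_q X$, paired back against the two $f$-sums, produces exactly the two main sums on the right-hand side of \eqref{Lbounds}. To bound the contribution of the secondary $O(X^{1/2+\varepsilon}|fl|^\varepsilon)$ term I parameterise $f=l_1m^2$, which is forced by $fl=\square$ since $l=l_1l_2^2$ with $l_1$ squarefree, so that $|fl|=|l_1l_2m|^2$ and $d_A(l_1m^2)\ll_\varepsilon(|l||m|)^\varepsilon$. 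The resulting sum over $m$ with $d(l_1m^2)\leq 2g$ is essentially geometric in $|m|^\varepsilon$ and yields a total error of size $O(|l|^\varepsilon X^{1/2+\varepsilon})$ after absorbing powers of $g$ and $q^{g\varepsilon}$ into a slightly enlarged $\varepsilon$.

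When $fl\neq\square$, I would instead invoke the Weil-type bound \eqref{Psumfnonsquare}, namely $\sum_P\chi_P(fl)\ll(X^{1/2}/\log_q X)\,d(fl)$, together with $d(fl)\leq d(f)+d(l)$. Using the identity $\sum_{f\in\mathcal{M}_n}d_A(f)=(n+1)q^n$, the resulting contribution to the error is bounded by
$$\frac{X^{1/2}}{\log_q X}\sum_{n=0}^{2g}(n+1)(n+d(l))\,q^{n/2}\;\ll\;\frac{X\,g(g+d(l))}{\log_q X},$$
which matches the remaining error term in \eqref{Lbounds}. The dual sum (the one multiplied by $u^{4g}$) is treated identically with the truncation $2g$ replaced by $2g-1$. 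The main technical subtlety lies precisely in this splitting: the trivial $|fl|^\varepsilon$ error coming from \eqref{wsum} is far too weak in the non-square regime, where $f$ ranges up to size $q^{2g}\asymp X/q$, so the saving of roughly $X^{1/2}$ provided by the Weil bound is essential to dominate the large main sum $\sum_{f\in\mathcal{M}_{\leq 2g}}d_A(f)/\sqrt{|f|}\asymp g\,q^g$.
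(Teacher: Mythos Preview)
Your proposal is correct and follows essentially the same route as the paper: apply Lemma~\ref{afe} with $k=2$, swap the order of summation, split according to whether $fl$ is a square, use \eqref{Psumfnonsquare} on the non-square terms to produce the $g(g+d(l))X/\log_q X$ error, and use the prime-counting input behind \eqref{wsum} on the square terms to extract the two main sums and the $|l|^\varepsilon X^{1/2+\varepsilon}$ error. The only cosmetic difference is that for the square-case error the paper simply bounds $d_A(fl)\ll|fl|^\varepsilon$ and sums over all $f\in\mathcal{M}_{\le 2g}$, whereas you parametrise $f=l_1m^2$ first; both give the same bound.
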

\begin{proof}
Lemmas~\ref{afe} and \ref{PVlemma} give that, for $|u|=1$,
\begin{align}
\label{L2exp}
\begin{split}
 \sum_{P \in \mathcal{P}_{2g+1,q}}  \mathcal{L} \Big( \frac{u}{\sqrt{q}}, \chi_P \Big)^2\chi_P(l)  =  \frac {X}{\log_q X} & \Big (\sum_{\substack{f \in \mathcal{M}_{\leq 2g} \\ fl=\square}} \frac{ d_{A}(f)u^{\deg(f)}}{\sqrt{|f|}}  + u^{4g} \sum_{\substack{f \in \mathcal{M}_{\leq 2g-1} \\ fl=\square}} \frac{ d_{A}(f)}{\sqrt{|f|} u^{ \deg(f)}}\Big )\\
&+O\Big( \frac {X^{1/2}}{\log_q X} \Big( \sum_{\substack{f \in \mathcal{M}_{\leq 2g}}} \frac{ d_{A}(f)\deg(fl)}{\sqrt{|f|}} + \sum_{f \in \mathcal{M}_{\leq 2g-1}} \frac{ d_{A}(f)\deg(fl)}{\sqrt{|f|}} \Big) \Big).
\end{split}
 \end{align}

  Now, we note the following estimation from \cite[Lemma 4.2]{Andrade&Keating}:
\begin{align*}
 \sum_{\substack{f \in \mathcal{M}_{n}}}d_A(f) \ll nq^n.
\end{align*}

  Using the above and the observation that $\deg(fl)=\deg(f)+\deg(l)$, we get that
\begin{align}
\label{nonsquareerr}
 \sum_{\substack{f \in \mathcal{M}_{\leq 2g}}} \frac{ d_{A}(f)\deg(fl)}{\sqrt{|f|}} =\sum^{2g}_{n=0}(n+\deg(l))q^{-n/2} \sum_{\substack{f \in \mathcal{M}_{n}}}d_A(f)) \ll \sum^{2g}_{n=0}(n+\deg(l))q^{-n/2}nq^n \ll (g^2+g \deg(l))X^{1/2},
 \end{align}

  Similarly, we have
\begin{align}
\label{squareerr}
 \sum_{\substack{f \in \mathcal{M}_{\leq 2g}}} \frac{ d_{A}(f)d_A(fl)}{\sqrt{|f|}} \ll \sum_{\substack{f \in \mathcal{M}_{\leq 2g}}} \frac{ d_{A}(f)(|f|^{\varepsilon} |l|^{\varepsilon})}{\sqrt{|f|}} \ll |l|^{\varepsilon}X^{1/2+\varepsilon}.
 \end{align}
Inserting the bounds in \eqref{nonsquareerr} and \eqref{squareerr} into \eqref{L2exp} now leads to \eqref{Lbounds} and completes the proof.
\end{proof}

\subsection{Bounds for $L$-functions}

  In this section, we gather various upper bounds concerning the $L$-functions under consideration in this paper.
Note that for $D \in \mathcal{H}_{2g+1,q}$, we have, as shown in \cite[(2.2)]{Florea17-2},
\begin{equation*}
%%\label{produs}
 \mathcal{L}(u,\chi_D) = \prod_{j=1}^{2g} \left( 1-\alpha_j q^{1/2}u \right) .
\end{equation*}
 where $|\alpha_j|=1$ by the Riemann hypothesis proven by A. Weil \cite{Weil}. Moreover, it is known that
\begin{equation} \label{Lpostive}
 L\left( \tfrac 12,\chi_D \right) \geq 0.
\end{equation}
We quote \cite[Corollary 4.3]{B&F20} (by further noting \eqref{ppt}) for the following upper bound concerning the second moment of quadratic $L$-functions.
\begin{lemma}
\label{corollaryupper}
Let $u=e^{i \theta}$ with $\theta \in [0,2\pi)$. Then
 \begin{equation*}
  \sum_{P \in \mathcal{P}_{2g+1}} \Big|\mathcal{L} \Big(\frac{u}{\sqrt{q}},\chi_P \Big) \Big|^2 \ll_\varepsilon \Big (\frac {X}{\log_q X} \Big )g^{1+\varepsilon} \min \Big\{ g,\frac{1}{ \overline{2\theta}}\Big\}^{2}.
 \end{equation*}
 Here $\overline{\theta} = \min \{ \theta, 2 \pi - \theta \}$ for $\theta \in [0, 2\pi)$.
 \end{lemma}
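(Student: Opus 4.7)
The plan is to establish the bound through a direct second-moment expansion of $|\mathcal{L}(u/\sqrt{q},\chi_P)|^2$ followed by the prime character-sum estimates of Lemma \ref{PVlemma}. Since $\chi_P$ is real-valued and $|u|=1$, one has
\[ |\mathcal{L}(u/\sqrt{q},\chi_P)|^2 = \mathcal{L}(u/\sqrt{q},\chi_P)\,\mathcal{L}(\bar u/\sqrt{q},\chi_P), \]
and I would apply Lemma \ref{afe} with $k=1$ to each factor, writing each as a principal Dirichlet polynomial of length $\leq g$ together with a dual piece carrying an overall factor $u^{2g}$ (respectively $\bar u^{2g}$). Multiplying out produces four double sums of the shape $\sum_{f,h}\chi_P(fh)\,\alpha(f,h;u)/\sqrt{|fh|}$, where $\alpha$ is a unit-modulus monomial in $u,\bar u$ (and possibly $u^{\pm 2g},\bar u^{\pm 2g}$).

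Next I would sum over $P\in\mathcal{P}_{2g+1,q}$ and split according to whether $fh$ is a perfect square. By \eqref{Psumfnonsquare} the non-square contribution is bounded by $(X^{1/2}/\log_q X)\sum_{f,h}d(fh)/\sqrt{|fh|}\ll X^{1/2+\varepsilon}$, which is absorbed in the error. For the diagonal, \eqref{wsum} produces the factor $X/\log_q X$, and I would parametrize $f=ab^2$, $h=ac^2$ with $a$ monic squarefree. Then $|fh|=|abc|^2$, the phase in the principal-times-principal term collapses, using $|u|=1$, to $u^{2d(b)}\bar u^{2d(c)}$, and using $\sum_{b\in\mathcal{M}_n}1/|b|=1$ each inner sum becomes a geometric progression $\sum_{n\leq N}e^{2in\theta}$ of modulus $\ll\min\{g,1/\overline{2\theta}\}$. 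The squarefree parameter contributes $\sum_{a\in\mathcal{M}_{\leq g}}\mu^2(a)/|a|\ll g^{1+\varepsilon}$, and assembling these pieces delivers the stated bound.

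The principal obstacle is in the three remaining cross terms that mix a principal and a dual piece, carrying an overall factor $u^{2g}$ or $\bar u^{2g}$. One must verify that after this shift the inner sums still reduce to geometric series of common ratio $e^{\pm 2i\theta}$, so that the $1/\overline{2\theta}$ savings survive, and that the altered length constraints do not worsen the estimate. Handling the bookkeeping uniformly across the four cases, and ensuring that the divisor-function growth in the non-square contribution does not spoil the stated $g^{1+\varepsilon}$ saving, is where the technical attention is needed.
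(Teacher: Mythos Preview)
The paper itself does not prove this lemma; it quotes \cite[Corollary~4.3]{B&F20}, remarking only that \eqref{ppt} converts the prime count into $X/\log_q X$.

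Your direct expansion has a genuine gap at the off-diagonal, and it is more than a bookkeeping issue. The assertion that the non-square contribution is $\ll X^{1/2+\varepsilon}$ is incorrect: since $\sum_{d(f)\le g}d_{A}(f)/\sqrt{|f|}=\sum_{n=0}^{g}(n+1)q^{n/2}\asymp g\,q^{g/2}\asymp g\,X^{1/4}$, one actually obtains
\[
\frac{X^{1/2}}{\log_q X}\sum_{d(f),\,d(h)\le g}\frac{d_{A}(fh)}{\sqrt{|fh|}}\ \ll\ \frac{X^{1/2}}{\log_q X}\Big(\sum_{d(f)\le g}\frac{d_{A}(f)}{\sqrt{|f|}}\Big)^{2}\ \asymp\ \frac{g^{2}X}{\log_q X},
\]
which is precisely the size of the error term in Lemma~\ref{lem:Ltwistsbound} at $l=1$. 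This exceeds the target $(X/\log_q X)\,g^{1+\varepsilon}\min\{g,1/\overline{2\theta}\}^{2}$ whenever $\overline{2\theta}\gg g^{-1/2}$, i.e.\ on most of the circle. Once \eqref{Psumfnonsquare} has been applied in absolute value, the unit-modulus factors $u^{d(f)}\bar u^{d(h)}$ cannot recover the missing $g^{1-\varepsilon}$: the weights grow geometrically in $d(f),d(h)$, so the sum is dominated by its boundary term and partial summation in $(d(f),d(h))$ saves nothing. Your diagonal analysis is essentially correct and does produce the $\min\{g,1/\overline{2\theta}\}^{2}$ saving, but that is not where the difficulty lies. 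The argument in \cite{B&F20}, following \cite[Theorem~2.7]{Florea17-2}, avoids the long off-diagonal entirely by first bounding $\log|\mathcal{L}(u/\sqrt q,\chi_D)|$ pointwise via a short Dirichlet polynomial over primes (cf.\ Proposition~\ref{prop-ub}), a fundamentally different route.
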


    We notice also the following special cases corresponding to $u=1$ in \cite[Theorem 2.7]{Florea17-2} and \cite[Proposition 4.1]{B&F20} concerning upper bounds for the $k$-th moment of $L$-functions.
\begin{lemma}
\label{prop: upperbound}
 For any real number $n \geq 0$ and any $\varepsilon>0$, we have
\begin{align}
\label{upperboundD}
    \sum_{D \in \mathcal{H}_{2g+1,q}}  L  \left( \half, \chi_D \right)^{2n} \ll & X (\log_q X)^{n(2n+1)+\varepsilon} \quad \mbox{and} \\
\label{upperboundP}
     \sum_{P \in \mathcal{P}_{2g+1,q}} L  \left( \half, \chi_P \right)^{2n} \ll & X (\log_q X)^{n(2n+1)-1+\varepsilon}.
\end{align}
\end{lemma}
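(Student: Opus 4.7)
The plan is direct: the two bounds in the lemma are nothing more than specializations to $u=1$ (so that the argument of $\mathcal{L}$ equals $1/\sqrt{q}=q^{-1/2}$, i.e.\ the central point $s=1/2$) of the already established moment estimates. Specifically, \cite[Theorem 2.7]{Florea17-2} gives the bound $\sum_{D\in\mathcal{H}_{2g+1,q}}|\mathcal{L}(u/\sqrt{q},\chi_D)|^{2n}\ll X(\log_q X)^{n(2n+1)+\varepsilon}$ uniformly for $u=e^{i\theta}$, and setting $u=1$ yields \eqref{upperboundD}; analogously \cite[Proposition 4.1]{B&F20} furnishes the bound for the family over $\mathcal{P}_{2g+1,q}$, and the loss of a factor $\log_q X$ in the exponent reflects the density of primes given by \eqref{ppt}. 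So my "proof" would simply be to invoke these two references and record the specialization.

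If instead one wanted to prove the bounds from scratch within the present paper, the natural route would be the Soundararajan--Harper method. The starting point is that the generalized Riemann hypothesis for $L(s,\chi_D)$ is a theorem of Weil over function fields, so $\log|L(1/2,\chi)|$ admits an unconditional upper bound in terms of a truncated Dirichlet polynomial $\sum_{d(P)\leq y}\chi(P)/(|P|^{1/2}d(P))$ plus a manageable error depending on $y$. One then raises this to the $2n$-th power, expands, and estimates the resulting mean values via repeated application of the twisted second moment bound in Lemma~\ref{corollaryupper}, together with the character sum estimates in Lemma~\ref{lem3} and Lemma~\ref{PVlemma}. Harper's refinement \cite{Harper} partitions the characters $\chi_D$ (respectively $\chi_P$) according to which dyadic range of the parameter $y$ witnesses the largest partial sum, and handles each range with a tailored choice of mollifier, which collectively yields a saving of the conjectural power of $\log_q X$ rather than an $\exp(2n(2n+1)/2\cdot\log\log\log)$-type loss.

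The principal obstacle, were one to carry this out in full, is the dyadic bookkeeping in the Harper partitioning: one must verify that the truncated moments of the short prime-supported Dirichlet polynomials satisfy the Gaussian-type bounds needed to close the induction on $n$, and this requires an upper bound for the twisted second moment that is uniform in the length and shape of the mollifier. In the function field setting this uniformity follows from the Weil bound encoded in Lemma~\ref{PVlemma} and the shape of Lemma~\ref{corollaryupper}. Since, as noted above, all of this labor is already carried out in \cite{Florea17-2} and \cite{B&F20}, I would simply cite those results and specialize to $u=1$, which completes the proof.
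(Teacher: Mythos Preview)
Your proposal is correct and matches the paper's treatment exactly: the paper does not give a standalone proof of this lemma but simply records it as the special case $u=1$ of \cite[Theorem 2.7]{Florea17-2} and \cite[Proposition 4.1]{B&F20}. Your additional sketch of a from-scratch Soundararajan--Harper argument is accurate context but not needed here.
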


In applying the upper bounds method of Soundararajan-Harper, we shall also make crucial use of an upper bound on $\log |L(1/2, \chi_{D})|$ for $D \in \mathcal{H}_{2g+1, q}$ in terms of a short Dirichlet polynomial over the primes. We now derive such a bound basing on the devices developed in the proofs of \cite[Theorem 3.3]{AT14}, \cite[Proposition 4.3]{BFK} and \cite[Lemma 3.1]{DFL}. We recall from the above that $L(s,\chi_{D})$  is a polynomial of degree at most $2g$ for $D \in H_{2g+1, q}$.  This allows us to
proceed as in the proof of \cite[Proposition 4.3]{BFK} by setting $m=2g$ there. Upon making use of the proof of \cite[Theorem 3.3]{AT14} as
well, we arrive at the following result that is analogue to \cite[Proposition 4.3]{BFK}.
\begin{proposition}
\label{prop-ub}
Let $D \in \mathcal{H}_{2g+1, q}$ and $m =2g$.  For $h \leq m$ and $z$ with $\Re(z) \geq 0$,  we have
\begin{align}
\label{logLupperbound}
\log \big| L(\tfrac{1}{2}+z, \chi_{D}) \big| \leq \frac{m}{h} + \frac{1}{h} \Re \bigg(  \sum_{\substack{j \geq 1 \\ d(P^j) \leq h}} \frac{
\chi_{D}(P^j) \log q^{h - j \deg(P)}}{|P|^{j ( 1/2+z+1/(h \log q) )} \log q^j} \bigg).
\end{align}
\end{proposition}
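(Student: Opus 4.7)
The plan is to adapt the Soundararajan--Harper upper bound method, following \cite{BFK} Proposition 4.3 with degree parameter $m = 2g$. By Weil's Riemann hypothesis, $\mathcal{L}(u, \chi_D) = \prod_{j=1}^m (1 - \alpha_j \sqrt{q}\, u)$ with $|\alpha_j| = 1$. Setting $u = q^{-1/2-z}$ and using $\Re(z) \ge 0$,
\[
\log|L(\tfrac12 + z, \chi_D)| = \sum_{j=1}^m \log|1 - \alpha_j q^{-z}|, \qquad |\alpha_j q^{-z}| \le 1.
\]

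The crux of the proof is the pointwise inequality: for every $y \in \mathbb{C}$ with $|y| \le 1$ and every integer $h \ge 1$,
\[
\log|1-y| \;\le\; \frac{1}{h} \;+\; \Re \sum_{n=1}^{h} \frac{(1-n/h)\,e^{-n/h}}{n}\, y^n.
\]
This is derived by expanding $-\log(1-y) = \sum_{n \ge 1} y^n/n$ (absolutely convergent for $|y|<1$ and boundary-conditionally for $|y|=1$, $y \ne 1$) and observing that the discrepancy between $-\log(1-y)$ and the weighted truncation, considered on $|y|=1$, is a Fourier series with non-negative coefficients $c_n = 1 - (1-n/h)e^{-n/h}$ for $n \le h$ and $c_n = 1$ for $n > h$; the Fej\'er-type positivity of the resulting kernel bounds its infimum by $-1/h$. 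The decay factor $e^{-n/h}$ is exactly what the spectral shift $z \mapsto z + 1/(h\log q)$ contributes.

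Applying this pointwise bound with $y = \alpha_j q^{-z}$ and summing over $j$ gives
\[
\log|L(\tfrac12+z, \chi_D)| \;\le\; \frac{m}{h} \;+\; \Re \sum_{n=1}^{h} \frac{(1-n/h)\,e^{-n/h}}{n}\, S_n\, q^{-nz},
\]
where $S_n := \sum_{j=1}^m \alpha_j^n$. To convert the Newton power sums into a prime-power sum, I would invoke the explicit formula obtained by equating the two expansions of $u\,\mathcal{L}'(u,\chi_D)/\mathcal{L}(u,\chi_D)$: the Hadamard-type factorization yields $-\sum_{k \ge 1} q^{k/2} S_k\, u^k$, while the Euler product yields $\sum_P d(P) \sum_{j \ge 1} \chi_D(P^j)\, u^{jd(P)}$, so that $S_n/n = -q^{-n/2} \sum_{jd(P) = n} \chi_D(P^j)/j$. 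Substituting this identity and using $e^{-n/h}\, q^{-n/2}\, q^{-nz} = |P|^{-j(1/2+z+1/(h\log q))}$ when $n = jd(P)$ collapses the $n$-sum into the Dirichlet polynomial over prime powers $P^j$ with $d(P^j) \le h$ displayed in \eqref{logLupperbound}, with the factor $(h - jd(P))/j$ matching $\log q^{h-jd(P)}/\log q^j$.

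The main obstacle is the pointwise inequality: establishing the sharp constant $1/h$ with this particular tent-and-decay weight requires the non-negative kernel construction at the heart of \cite[Proposition 4.3]{BFK} and the related Chandee--Soundararajan-style approach used in \cite{AT14}. The subsequent steps---summation over the $m$ reciprocal roots and conversion to primes via the explicit formula---are then essentially bookkeeping, greatly simplified by the fact that $\mathcal{L}(u,\chi_D)$ is a polynomial whose zeros all lie on a single circle.
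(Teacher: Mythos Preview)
Your outline is exactly the route the paper takes: the paper gives no self-contained argument but simply says to run \cite[Proposition~4.3]{BFK} with $m=2g$, drawing on \cite[Theorem~3.3]{AT14}, and you have sketched precisely that method (factor via Weil's RH, apply the pointwise inequality to each root, convert power sums to prime powers by the explicit formula).

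One concrete slip to fix: your pointwise inequality has the wrong sign. With $|y|\le 1$ the correct statement is
\[
\log|1-y| \;\le\; \frac{1}{h} \;-\; \Re \sum_{n=1}^{h} \frac{(1-n/h)\,e^{-n/h}}{n}\, y^n,
\]
as one sees immediately by testing $y=-1$ (your version would give roughly $-\log 2$ on the right, which is smaller than $\log 2 = \log|1-(-1)|$). Your explicit-formula identity $S_n/n = -q^{-n/2}\sum_{jd(P)=n}\chi_D(P^j)/j$ is correct, and combined with the corrected minus sign above the two signs cancel to produce the $+$ in \eqref{logLupperbound}. Also, the heuristic ``non-negative Fourier coefficients bound the infimum by $-1/h$'' is not by itself a proof of the pointwise inequality; as you note at the end, that step genuinely requires the extremal-kernel construction from \cite{BFK}/\cite{AT14}.
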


 Arguing as in the proof of \cite[Lemma 3.1]{DFL}, we see that the terms $j \geq 3$ in \eqref{logLupperbound} contribute $O(1)$. We further
set $m=2g$, $z=0$, $X=q^{2g+1}$, $x=q^h$ and estimate the term $j=2$ using Lemma \ref{RS} to arrive from Proposition \ref{prop-ub} the following upper bound for $\log | {\textstyle L(1/2,\chi_D) }|$.
\begin{lemma}
\label{prop-logup}
Let $D \in \mathcal{H}_{2g+1, q}$. We have for any $x \leq X$,
\begin{align*}
%%\label{logLupperbound1}
\log | {\textstyle L(\frac{1}{2},\chi_D) }| & \leq   \sum_{\substack{ |P| \leq x}} \frac{ \chi_{D}(P) }{|P|^{1/2+1/\log x}
} \frac {\log (x/|P|)}{\log x}+ \frac 12 \log \log x+\frac {\log X}{\log x}+O(1).
\end{align*}
\end{lemma}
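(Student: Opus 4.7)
The plan is to specialize Proposition \ref{prop-ub} by taking $m = 2g$, $z = 0$, and $x = q^h$ (so that $h = \log x / \log q$), and then to estimate the inner sum term-by-term for $j = 1$, $j = 2$, and $j \geq 3$ separately. The constant term $m/h$ becomes $2g \log q / \log x$, which differs from $\log X / \log x$ by $\log q / \log x = O(1)$ since $\log X = (2g+1) \log q$; this accounts for the $\log X / \log x$ summand in the claimed bound.

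For the $j = 1$ term, expanding $\log q^{h - d(P)} = (h - d(P)) \log q$ together with $\log q^1 = \log q$, the outer factor $1/h$ combines with $(h - d(P))/h = \log(x/|P|)/\log x$ and the identification $1/(h \log q) = 1/\log x$ to reproduce exactly the main Dirichlet polynomial
\[
\sum_{|P| \leq x} \frac{\chi_D(P)}{|P|^{1/2 + 1/\log x}} \frac{\log(x/|P|)}{\log x}.
\]
For the $j = 2$ term, since $\chi_D(P^2)$ is $0$ or $1$ and $0 \leq (h - 2 d(P))/h \leq 1$ throughout the summation range, its contribution is bounded by $\tfrac{1}{2} \sum_{|P| \leq \sqrt{x}} |P|^{-1 - 2/\log x}$, which by \eqref{lam2p} of Lemma \ref{RS} equals $\tfrac{1}{2} \log \log \sqrt{x} + O(1) = \tfrac{1}{2} \log \log x + O(1)$, producing the remaining named term.

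Finally, the $j \geq 3$ contribution is dominated by $\sum_{j \geq 3} j^{-1} \sum_P |P|^{-j/2}$; each inner sum converges (as follows from the prime polynomial theorem \eqref{ppt}) and decays geometrically in $j$, so the total is $O(1)$. Summing the four contributions yields the claimed bound. No significant obstacle is expected: the proof is essentially careful bookkeeping to match each $j$-term of Proposition \ref{prop-ub} with the correct piece of the target inequality, with Lemma \ref{RS} providing the only analytic input beyond Proposition \ref{prop-ub} itself.
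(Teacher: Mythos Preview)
Your proposal is correct and follows essentially the same approach as the paper: the paper also specializes Proposition~\ref{prop-ub} with $m=2g$, $z=0$, $x=q^h$, estimates the $j=2$ term via Lemma~\ref{RS}, and absorbs the $j\geq 3$ terms into $O(1)$. Your write-up simply makes explicit the bookkeeping that the paper leaves to the reader (and to the cited proof of \cite[Lemma 3.1]{DFL}).
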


The above lemma is reminiscent of \cite[Proposition 3]{Harper}, a result on the classical quadratic Dirichlet $L$-functions.

\section{Proof of Proposition \ref{Prop1'}}

  Setting $k=u=1$ in Lemma \ref{afe} implies that
\begin{align}
\label{4.30}
\sum_{P \in\mathcal{P}_{2g+1,q}}L(\tfrac{1}{2},\chi_{P})\chi_{P}(l)
&=\sum_{\deg(f_{1})\leq g}\frac{1}{\sqrt{|f_{1}|}}\sum_{P \in\mathcal{P}_{2g+1,q}}\left(\frac{P}{f_{1}l}\right)+\sum_{\deg(f_{2})\leq
g-1}\frac{1}{\sqrt{|f_{2}|}}\sum_{P \in\mathcal{P}_{2g+1,q}}\left(\frac{P}{f_{2}l}\right).
\end{align}
If $f_{1}l \neq \square$ or $f_{2}l \neq \square$, then Lemma \ref{PVlemma} yields, keeping in mind that $X = q^{2g+1}$,
\begin{align}
\label{4.31}
\begin{split}
\sum_{\deg(f_{1})\leq g}\frac{1}{\sqrt{|f_{1}|}}\sum_{P \in\mathcal{P}_{2g+1,q}}\left(\frac{P}{f_{1}l}\right)
\ll & \sum_{\deg(f_{1})\leq g}\frac{|f_{1}l|^{\varepsilon}}{\sqrt{|f_{1}|}}X^{1/2+\varepsilon}
\ll
X^{3/4+\varepsilon}|l|^{\varepsilon},  \quad \mbox{or} \\
\sum_{\deg(f_{2})\leq g-1}\frac{1}{\sqrt{|f_{2}|}}\sum_{P \in\mathcal{P}_{2g+1,q}}\left(\frac{P}{f_{2}l}\right)
\ll &
X^{3/4+\varepsilon}|l|^{\varepsilon}.
\end{split}
\end{align}

Note that, similar to the integer case given in \cite[Theorem 2.11]{MVa}),  we have for any $\varepsilon >0$,
\begin{align}
\label{dbound}
\begin{split}
 d_A(f) \ll |f|^{\varepsilon}.
\end{split}
\end{align}

  If $f_{1}l=\square$, then $f_{1}$ is of the form $l_1f^{2}$.  We apply \eqref{ppt} and \eqref{dbound} to see that
\begin{align}
\begin{split}
\label{4.35}
\sum_{\substack{\deg(f_{1})\leq g \\ f_{1}l=\square}}\frac{1}{\sqrt{|f_{1}|}}\sum_{P \in\mathcal{P}_{2g+1,q}}\left(\frac{P}{f_{1}l}\right)
= & \frac {X}{\log_q X} \sum_{\substack{\deg(f_{1})\leq g \\ f_{1}l=\square}}\frac{1}{\sqrt{|f_{1}|}}+O\Bigg( \sqrt{X}\sum_{\substack{\deg(f_{1})\leq g
}}\frac{d_A(lf_1)}{\sqrt{|f_{1}|}}\Bigg) \\
=& \frac{X}{\sqrt{|l_{1}|} \log_q X }\sum_{\substack{
\deg(f)\leq (g-\deg(l_1))/2}}\frac{1}{|f|}+O\left( X^{3/4+\varepsilon}|l|^{\varepsilon} \right).
\end{split}
\end{align}

To evaluate the last sum above, we note that there are $q^n$ monic polynomials of degree $n$.  If $g-\deg(l_1)$ is even, we get
\[ \sum_{\deg(f)\leq (g-\deg(l_1))/2}\frac{1}{|f|} = \frac{g-\deg(l_1)}{2} +1 = \frac{1}{2} \log_q \frac{\sqrt{X}}{|l_1|} + \frac{3}{4} . \]
A similar treatment for the sum over $f_2$ with $f_2l= \square$ leads to, recalling that $g-\deg(l_1)$ is even,
\[ \sum_{\deg(f)\leq (g-\deg(l_1))/2}\frac{1}{|f|} = \frac{g-\deg(l_1)-2}{2} +1 = \frac{1}{2} \log_q \frac{\sqrt{X}}{|l_1|} - \frac{1}{4} . \]
The above two expressions lead to \eqref{eq:1ndmomentP} if $g-\deg(l_1)$ is even.  The case of add $g-\deg(l_1)$ is proved in using similar arguments. This completes the proof.

\section{Proof of Proposition \ref{Prop1}}

Now setting $k=2$, $u=1$ in Lemma \ref{afe} yields
\[  S:= \sum_{P\in \mathcal{P}_{2g+1,q}} L \big( \tfrac{1}{2},\chi_P \big)^2\chi_P(l)=\sum_{P\in \mathcal{P}_{2g+1,q}}\sum_{f \in \mathcal{M}_{\leq 2g}} \frac{d_{A}(f)\chi_P(lf)}{\sqrt{|f|}}+ \sum_{P\in \mathcal{P}_{2g+1,q}}\sum_{f \in \mathcal{M}_{\leq 2g-1}} \frac{d_{A}(f)\chi_P(lf)}{\sqrt{|f|}}. \]

   We apply Lemma \ref{lem4} to see that for $0<r<1$ and some integer $Y \leq g$,
\[ S =S_1(Y)+S_2(Y), \]
where
\[ S_1 (Y) = \sum_{P\in \mathcal{P}_{2g+1,q}}\sum_{f \in \mathcal{M}_{\leq 2Y}} \frac{d_{A}(f)\chi_P(lf)}{\sqrt{|f|}}+ \sum_{P\in \mathcal{P}_{2g+1,q}}\sum_{f \in \mathcal{M}_{\leq 2Y-1}} \frac{d_{A}(f)\chi_P(lf)}{\sqrt{|f|}} \]
and
\[ S_2(Y) = \frac{1}{2 \pi i} \oint\limits_{|u|=r}\sum_{P \in \mathcal{P}_{2g+1,q}}  \mathcal{L} \Big( \frac{u}{\sqrt{q}}, \chi_P \Big)^2\chi_P(l) \, \frac{(1+u)(1-u^{2g-2Y}) \dif u}{u^{2g+1}(1-u)} . \]

\subsection{Evaluation of $S_2(Y)$}

   We evaluate $S_2(Y)$ in this section by noting that the integrand in the expression for $S_2(Y)$ is analytical when $|u|=1$ since
\[
\Big|\frac{1-u^{2g-2Y}}{1-u}\Big|\leq 2 (g-Y).
\]

 We may thus shift the contour of integration to $|u|=1$ in the expression for $S_2(Y)$ and apply Lemma \ref{lem4}, getting
\begin{align*}
S_2(Y) =& \frac{1}{2 \pi i} \oint\limits_{|u|=1}\sum_{P \in \mathcal{P}_{2g+1,q}}  \mathcal{L} \Big( \frac{u}{\sqrt{q}}, \chi_P \Big)^2\chi_P(l) \, \frac{(1-u^{2g-2Y})(1+u) \, \dif u}{u^{2g+1}(1-u)} \\
=& \frac{1}{2 \pi i}\int\limits_{C_{1}} \sum_{P \in \mathcal{P}_{2g+1,q}}  \mathcal{L} \Big( \frac{u}{\sqrt{q}}, \chi_P \Big)^2\chi_P(l) \, \frac{(1-u^{2g-2Y})(1+u) \, \dif u}{u^{2g+1}(1-u)} \\
& +\ \frac{1}{2 \pi i}\int\limits_{C_{2}} \sum_{P \in \mathcal{P}_{2g+1,q}}  \mathcal{L} \Big( \frac{u}{\sqrt{q}}, \chi_P \Big)^2\chi_P(l) \, \frac{(1-u^{2g-2Y})(1+u)\, \dif u}{u^{2g+1}(1-u)} :=S_{21}(Y) + S_{22}(Y),
 \end{align*}
say.   Here $C_{1}$ denotes the arc of angle $4\pi\theta_1$ centered around $1$,  with $1/g\ll \theta_1=o(1)$, and $C_{2}$ is the complement. Lemma \ref{corollaryupper} can be used to bound $S_{22}$, giving
\begin{equation}
\label{E2}
S_{22}(Y) \ll_\varepsilon \frac {X}{\log_q X}  g^{1+\varepsilon} (g-Y)\theta_1^{-1}.
\end{equation}

  To estimate $S_{21}(Y)$, we apply Lemma \ref{lem:Ltwistsbound} and notice that when $fl=\square$, then $f$ is of the form $l_1f^{2}$. We then deduce that
 \begin{align*}
 S_{21}(Y) =& \frac {X}{\log_q X} \sum_{f \in \mathcal{M}_{\leq (2g-\deg(l_1))/2}} \frac 1{\sqrt{|l_1|}}\frac{ d_{A}(l_1f^2)}{|f|}\frac{1}{2 \pi i}\int\limits_{C_{1}}\frac{(1-u^{2g-2Y})(1+u)\, \dif u}{u^{2g-2\deg(f)-\deg(l_1)+1}(1-u)}\\
 &\hspace*{2cm} + \frac {X}{\log_q X} \sum_{f \in \mathcal{M}_{\leq (2g-\deg(l_1)-1)/2}}\frac 1{\sqrt{|l_1|}} \frac{ d_{A}(l_1f^2)}{|f|} \frac{1}{2 \pi i}\int\limits_{C_{1}}\frac{(1-u^{2g-2Y})(1+u)\, \dif u}{u^{-2g+2 \deg(f)+\deg(l_1)+1}(1-u)} \\
& \hspace*{3cm} +O(g(g+\deg(l))(g-Y)\theta_1+|l|^{\varepsilon}X^{1/2+\varepsilon}(g-Y)\theta_1).
 \end{align*}

   Note that
 \begin{equation*}
%%\label{series}
\sum_{f \in \mathcal{M}} d_{A}(l_1f^2) v^{\deg(f)} =\prod_{P |l_1} \left( \sum^{\infty}_{n=0}(2n+2)v^{n\deg(P)} \right) \prod_{P \nmid l_1} \left( \sum^{\infty}_{n=0}(2n+1)v^{n\deg(P)} \right)=\deg(l_1)C(v;l_1) \frac{ \mathcal{Z}(v)^3}{\mathcal{Z}(v^2)},
\end{equation*}
  where
\begin{equation*}
%%\label{Cdef}
C(v;l_1)=\prod_{P|l_1} \left( 1+v^{\deg(P)} \right)^{-1}.
\end{equation*}

  We now apply \eqref{perron1} to deduce that when $\deg(l_1)$ is even and for $r<1$,
 \begin{align*}
 S_{21}(Y)=& \frac {X}{\log_q X}  \frac {d_A(l_1)}{\sqrt{|l_1|}}\frac{1}{(2 \pi i)^2}\oint\limits_{|v|=r}\int\limits_{C_{1}}\frac{ C(u^2v/q;l_1)\mathcal{Z}(u^2v/q)^3 (1-u^{2g-2Y})(1+u)}{\mathcal{Z}(u^4v^2/q^2) (1-u)(1-v) u^{2g-\deg(l_1)+1} v^{g-\deg(l_1)/2+1}} \, \dif u \, \dif v \\
 & \hspace*{2cm}+ \frac {X}{\log_q X} \frac {d_A(l_1)}{\sqrt{|l_1|}}\frac{1}{(2 \pi i)^2}\oint\limits_{|v|=r}\int\limits_{C_{1}}\frac{ C(v/(u^2q);l_1) \mathcal{Z}(v/(u^2q))^3(1-u^{2g-2Y})(1+u)}{\mathcal{Z}(v^2/(u^4q^2)) (1-u) (1-v)u^{-2g+\deg(l_1)+1}v^{g-\deg(l_1)/2}} \, \dif u \, \dif v \\
& \hspace*{3cm}+O \left( \frac {g(g+\deg(l))(g-Y)X\theta_1}{\log_q X}+|l|^{\varepsilon}X^{1/2+\varepsilon}(g-Y)\theta_1 \right) \\
=& \frac {X}{\log_q X} \frac {d_A(l_1)}{\sqrt{|l_1|}} \frac{1}{(2 \pi i)^2}\oint\limits_{|v|=r}\int\limits_{C_{1}}\frac{C(v/q;l_1)(1-u^{2g-2Y})(1-v^2/q)(1+u)}{uv^{g-\deg(l_1)/2+1}(1-u)(1-v/u^2)(1-v)^3} \,\dif u \, \dif v \\
 &\hspace*{2cm}+\frac {X}{\log_q X} \frac {d_A(l_1)}{\sqrt{|l_1|}}\frac{1}{(2 \pi i)^2}\oint\limits_{|v|=r}\int\limits_{C_{1}}\frac{u C(v/q;l_1)(1-u^{2g-2Y})(1-v^2/q)(1+u)}{v^{g-\deg(l_1)/2}(1-u)(1-u^2v)(1-v)^3} \,\dif u \, \dif v \\
&\hspace*{3cm} +O\left( \frac {g(g+\deg(l))(g-Y)X\theta_1}{\log_q X}+|l|^{\varepsilon}X^{1/2+\varepsilon}(g-Y)\theta_1 \right).
\end{align*}

Recasting the integrands over $C_1$, we get
 \begin{align*}
 \frac{1}{2 \pi i} \int\limits_{C_{1}} \frac{(1-u^{2g-2Y})(1+u) \dif u}{u(1-u) (1-v/u^2)}=& \sum_{j=0}^{2g-2Y-1} \sum_{n=0}^{\infty}v^n \frac{1}{2 \pi i} \int_{C_{1}} u^{j-2n-1}(1+u) \, \dif u \\
 =& \sum_{j=0}^{2g-2Y-1} \sum_{n=0}^{\infty}v^n \int\limits_{-\theta_1}^{\theta_1} ( e^{2 \pi i \theta(j-2n)}+ e^{2 \pi i \theta(j+1-2n)})\, \dif \theta \\
 =& \frac{1}{ \pi } \sum_{j=0}^{2g-2Y-1} \sum_{n=0}^{\infty}v^n \Big (\frac{ \sin(2 \pi (2n-j) \theta_1)}{2n-j}+\frac{ \sin(2 \pi (2n-j-1) \theta_1)}{2n-j-1} \Big ),
 \end{align*}
and similarly,
 \begin{align*}
\frac{1}{2 \pi i} \int\limits_{C_{1}} \frac{uv(1-u^{2g-2X})(1+u) \dif u}{(1-u) (1-u^2v)} %= &  \sum_{j=0}^{2g-2Y-1} \sum_{n=1}^{\infty}v^n \frac{1}{2 \pi i} \int_{C_{1}} u^{j+2n-1}(1+u) \, du \\
% =& \sum_{j=0}^{2g-2Y-1} \sum_{n=1}^{\infty}v^n \int_{-\theta_1}^{\theta_1} ( e^{2 \pi i \theta(j+2n)}+ e^{2 \pi i \theta(j+1+2n)}) \, d \theta \\
=& \frac{1}{ \pi } \sum_{j=0}^{2g-2Y-1} \sum_{n=1}^{\infty}v^n \Big (\frac{ \sin(2 \pi (2n+j) \theta_1)}{2n+j}+\frac{ \sin(2 \pi (2n+j+1) \theta_1)}{2n+j+1} \Big ).
\end{align*}

  We set
\begin{align*}
 S_{+}(n,j;\theta_1)=& \frac{ \sin(2 \pi (2n+j) \theta_1)}{2n+j}+\frac{ \sin(2 \pi (2n+j+1) \theta_1)}{2n+j+1}, \quad \mbox{and} \\
 S_{-}(n,j;\theta_1)=& \frac{ \sin(2 \pi (2n-j) \theta_1)}{2n-j}+\frac{ \sin(2 \pi (2n-j-1) \theta_1)}{2n-j-1}.
\end{align*}
  Thus we have
\begin{align*}
  S_{21}&(Y) \\
=& \frac{1}{\pi} \frac {X}{\log_q X} \frac {d_A(l_1)}{\sqrt{|l_1|}}\frac{1}{2 \pi i} \oint\limits_{|v|=r} \sum_{j=0}^{2g-2Y-1} \left(\sum_{n=0}^{g-\deg(l_1)/2}v^n S_{-}(n,j;\theta_1)+\sum_{n=1}^{g-\deg(l_1)/2}v^n S_{+}(n,j;\theta_1) \right)\frac{C(v/q;l_1)(1-v^2/q) \, \dif v}{v^{g-\deg(l_1)/2+1}(1-v)^3} \\
& \hspace*{2cm} +O \left( \frac {g(g+\deg(l))(g-Y)X\theta_1}{\log_q X}+|l|^{\varepsilon}X^{1/2+\varepsilon}(g-Y)\theta_1 \right).
 \end{align*}

 We dilate the contour by taking $r$ to infinity and compute the residue at $v=1$.  This renders
 \begin{align*}
  S_{21}(Y)  = \frac{1}{2\pi} \frac {X}{\log_q X} \frac {d_A(l_1)}{\sqrt{|l_1|}} & \sum_{j=0}^{2g-2Y-1}
\left( \sum_{n=0}^{g-\deg(l_1)/2}  S_{-}(n,j;\theta_1)P(n)+ \sum_{n=1}^{g-\deg(l_1)/2}  S_{+}(n,j;\theta_1)P(n) \right)\\
&   +O \left( \frac {g(g+\deg(l))(g-Y)X\theta_1}{\log_q X}+|l|^{\varepsilon}X^{1/2+\varepsilon}(g-Y)\theta_1 \right),
 \end{align*}
 where
\begin{align*}
P(x) = C(1/q;l_1) & \Big(1-\frac{1}{q}\Big)(g-\deg(l_1)/2-x+1)(g-\deg(l_1)/2-x+2)\\
&-2(g-\deg(l_1)/2-x+1)\Big( \frac 1q C'(1/q;l_1) \Big(1-\frac 1q \Big)-C(1/q;l_1)\frac 2q \Big )\\
&+\frac {1}{q^2} \Big( 1-\frac 1q \Big) C''(1/q;l_1)-\frac 4{q^2}C'(1/q;l_1).
\end{align*}
 Using \cite[Lemma 9.4]{Florea17-2}, we obtain that
\begin{align*}
%%\label{E1}
 S_{21}(Y) = \frac {X}{\log_q X} \frac {d_A(l_1)}{\sqrt{|l_1|}} & C(1/q;l_1)\frac{(g-\deg(l_1)/2)^2(g-Y)}{2\zeta_q(2)}+ O \left( |l_1|^{\varepsilon} (g-\deg(l_1)/2)(g-Y)(\log g) \frac {X}{\log_q X} \right. \\
&  + |l_1|^{\varepsilon}(g-\deg(l_1)/2)(g-Y)^3\theta_1\frac {X}{\log_q X} + |l_1|^{\varepsilon}(g-\deg(l_1)/2)(g-Y)\theta_1^{-1}\frac {X}{\log_q X}  \\
& \left. + |l_1|^{\varepsilon}(g-Y)\theta_1^{-2}\frac {X}{\log_q X}+ \frac {g(g+\deg(l))(g-Y)X\theta_1}{\log_q X}+|l|^{\varepsilon}X^{1/2+\varepsilon}(g-Y)\theta_1 \right).
\end{align*}

We now set $\theta_1=1/\sqrt{g-\deg(l_1)/2}$.  The above formula, together with \eqref{E2}, implies that
\begin{align}
\label{S2}
\begin{split}
 S_{2}(Y) = \frac {X}{\log_q X} & \frac {d_A(l_1)}{\sqrt{|l_1|}} C(1/q;l_1)\frac{(g-\deg(l_1)/2)^2(g-Y)}{2\zeta_q(2)} \\
  &+O\left( |l|^{\varepsilon}(g-\deg(l_1)/2)^{1/2}(g-Y)^3\frac {X}{\log_q X} +|l|^{\varepsilon}(g-\deg(l_1)/2)^{3/2+\varepsilon}(g-Y)\frac {X}{\log_q X} \right. \\
& \hspace*{1cm}  \left. + g(g+\deg(l))(g-Y)(g-\deg(l_1)/2)^{-1/2}\frac {X}{\log_q X}+|l|^{\varepsilon}X^{1/2+\varepsilon}(g-Y)(g-\deg(l_1)/2)^{-1/2} \right).
\end{split}
\end{align}

 Similarly, replacing $\deg(l_1)$ by $\deg(l_1)+1$ throughout the above computation gives $S_{2}(Y)$ for the case in which $\deg(l_1)$ is odd.

\subsection{Conclusion}
  We now evaluate $S_1(Y)$ which, we recall, is given by
\begin{align*}
 S_1(Y)=\sum_{P\in \mathcal{P}_{2g+1,q}}\sum_{f \in \mathcal{M}_{\leq 2Y}} \frac{d_{A}(f)\chi_P(lf)}{\sqrt{|f|}}+ \sum_{P\in \mathcal{P}_{2g+1,q}}\sum_{f \in \mathcal{M}_{\leq 2Y-1}} \frac{d_{A}(f)\chi_P(lf)}{\sqrt{|f|}}.
\end{align*}

  We further apply Lemma \ref{PVlemma} and argue in a similar way as the proof of Lemma~\ref{lem:Ltwistsbound}.  We are led to
\begin{align}
\label{S1Y}
 S_1(Y)= \frac {X}{\log_q X} \sum_{\substack{f \in \mathcal{M}_{\leq 2Y} \\ fl=\square}} \frac{d_{A}(f)}{\sqrt{|f|}}+ \frac {X}{\log_q X} \sum_{\substack{f \in \mathcal{M}_{\leq 2Y-1} \\ fl=\square}} \frac{d_{A}(f)}{\sqrt{|f|}}+O \left( \frac {g(g+\deg(l))X^{1/2}q^Y}{\log_q X}+|l|^{\varepsilon}q^{Y+\varepsilon} \right).
\end{align}

 Once again writing $f$ in the form $l_1f^{2}$ when $fl=\square$, we apply Perron's formula given in Lemma \ref{lem4} to see that  when $\deg(l_1)$ is even, the above sums over $f$ can be written as
\begin{align*}
 &  \frac {d_A(l_1)}{\sqrt{|l_1|}}\frac{1}{2\pi i}\oint\limits_{|u|=r}\frac{ C(u/q;l_1)\mathcal{Z}(u/q)^3}{\mathcal{Z}(u^2/q^2)}\frac{(1+u) \dif u}{u^{Y-\deg(l_1)/2+1}(1-u)}=  \frac {d_A(l_1)}{\sqrt{|l_1|}}\frac{1}{2\pi i}\oint\limits_{|u|=r}\frac{C(u/q;l_1)(1-u^2/q)(1+u) \dif u}{u^{Y-\deg(l_1)/2+1}(1-u)^4},
\end{align*}
  where $0<r<1$. \newline

 We now enlarge the contour of integration to pass the pole at $u=1$ to see that the above equals to
\begin{align}
\label{S1main}
\begin{split}
 \frac {d_A(l_1)}{\sqrt{|l_1|}} & \left(C(1/q;l_1)\left( \frac{(Y-\deg(l_1)/2)^3}{6\zeta_q(2)}+ \left( Y-\frac{\deg(l_1)}{2} \right)^2 \right)-\frac {1}{q} C'(1/q;l_1)\frac{(Y-\deg(l_1)/2)^2}{2\zeta_q(2)} \right) \\
& +  \frac {d_A(l_1)}{\sqrt{|l_1|}}\left(C(1/q;l_1)\left( \frac{(Y-\deg(l_1)/2-1)^3}{6\zeta_q(2)}+ \left( Y- \frac{\deg(l_1)}{2}-1 \right)^2 \right)-\frac {1}{q} C'(1/q;l_1)\frac{(Y-\deg(l_1)/2-1)^2}{2\zeta_q(2)} \right) \\
& \hspace*{2cm} + O \left( \frac {d_A(l_1)}{\sqrt{|l_1|}}|l_1|^{\varepsilon}(Y-\deg(l_1)/2) \right) .
\end{split}
\end{align}

   Applying \eqref{S1main} in \eqref{S1Y}, we conclude that when $\deg(l_1)$ is even,
\begin{align}
\label{S1Yexp}
\begin{split}
 S_1&(Y) \\
 =& \frac {X}{\log_q X} \frac {d_A(l_1)}{\sqrt{|l_1|}} \left( C(1/q;l_1)\left( \frac{(Y-\deg(l_1)/2)^3}{6\zeta_q(2)}+ \left( Y- \frac{\deg(l_1)}{2} \right)^2 \right)-\frac {1}{q} C'(1/q;l_1)\frac{(Y-\deg(l_1)/2)^2}{2\zeta_q(2)} \right) \\
&+ \frac {X}{\log_q X} \frac {d_A(l_1)}{\sqrt{|l_1|}}\left( C(1/q;l_1)\left( \frac{(Y-\deg(l_1)/2-1)^3}{6\zeta_q(2)}+ \left( Y-\frac{\deg(l_1)}{2}-1 \right)^2 \right)-\frac {1}{q} C'(1/q;l_1)\frac{(Y-\deg(l_1)/2-1)^2}{2\zeta_q(2)} \right) \\
& \hspace*{2cm} + O \left( \frac {d_A(l_1)}{\sqrt{|l_1|}}|l_1|^{\varepsilon}(Y-\deg(l_1)/2))  \frac {X}{\log_q X} + \frac {g(g+\deg(l))X^{1/2}q^Y}{\log_q X}+|l|^{\varepsilon}q^{Y+\varepsilon} \right).
\end{split}
\end{align}

  Notice that we have
\begin{align}
\label{comb1}
\begin{split}
  \frac{(Y-\deg(l_1)/2)^3}{6\zeta_q(2)}+& \left( Y- \frac{\deg(l_1)}{2} \right)^2  +\frac{(g-\deg(l_1)/2)^2(g-Y)}{2\zeta_q(2)}\\
=&\frac{(g-\deg(l_1)/2)^3}{6\zeta_q(2)}+ \left( g-\frac{\deg(l_1)}{2} \right)^2 +O \left( (g-\deg(l_1)/2)(g-Y)^2 \right)
\end{split}
\end{align}
and similarly
\begin{align} \label{comb2}
\begin{split}
 \frac{(Y-\deg(l_1)/2-1)^3}{6\zeta_q(2)}+ & \left( Y- \frac{\deg(l_1)}{2}-1 \right)^2 +\frac{(g-\deg(l_1)/2)^2(g-Y)}{2\zeta_q(2)}\\
=&\frac{(g-\deg(l_1)/2)^3}{6\zeta_q(2)}+\frac 12 \left( 1+\frac 1q \right) \left( g- \frac{\deg(l_1)}{2} \right)^2 +O\left( (g-\deg(l_1)/2)(g-Y)^2 \right).
\end{split}
\end{align}

   We now combine \eqref{S2}, \eqref{S1Yexp} and make use of \eqref{comb1}, \eqref{comb2} to deduce \eqref{eq:1ndmomentE} for the case when $\deg(l_1)$ is even, upon setting $Y=g-[100 \log g]$. The case in which $\deg(l_1)$ is odd can be similarly established. This completes the proof of Proposition \ref{Prop1}.

\section{Proof of Theorem \ref{thmorderofmag}}
\label{sec:upper bd}

\subsection{Setup}
Since the result of the theorem in the case of $k=1$ follows from Propositions~\ref{Prop1'} and~\ref{Prop-twisted1stmoment}, we may assume that $k \neq 1$ in the sequal.  Recall that we have $X=q^{2g+1}$. Replacing $k$ by $2k$ and noting \eqref{Lpostive}, we see that in order to prove Theorem \ref{thmorderofmag}, it suffices to show that for any real $k>0$,
\begin{align}\label{lowerbounds}
\sum_{D\in\mathcal{H}_{2g+1,q}} L(\tfrac{1}{2},\chi_{D})^{2k}\gg_{k}  X(\log_q X)^{k(2k+1)},  \quad
\sum_{P\in\mathcal{P}_{2g+1,q}} L(\tfrac{1}{2},\chi_{P})^{2k}\gg_{k}  X(\log_q X)^{k(2k+1)-1},
\end{align}
  and
\begin{align} \label{upperbounds}
\sum_{D\in\mathcal{H}_{2g+1,q}} L(\tfrac{1}{2},\chi_{D})^{2k}\ll_{k} X(\log_q X)^{k(2k+1)},
\sum_{P\in\mathcal{P}_{2g+1,q}} L(\tfrac{1}{2},\chi_{P})^{2k}\ll_{k} X(\log_q X)^{k(2k+1)-1}.
\end{align}

We follow the approach of A. J. Harper in \cite{Harper} and define for a large number $M$ depending on $k$ only,
\begin{align}
\label{alphadef}
 \alpha_{0} = \frac{\log 2}{\log X}, \;\;\;\;\; \alpha_{j} = \frac{20^{j-1}}{(\log\log X)^{2}} \;\;\; \mbox{for all} \; j \geq 1, \quad
\mathcal{J} = \mathcal{J}_{k, X} = 1 + \max\{j : \alpha_{j} \leq 10^{-M} \} .
\end{align}

Applying the above notations and Lemma \ref{RS}, we see that for $X$ large enough,
\begin{align}
\label{sump1}
 \mathcal{J} \leq \log\log\log X , \;\;\;\;\; \alpha_{1} = \frac{1}{(\log\log X)^{2}} , \;\;\;\;\; \sum_{|P| \leq X^{1/(\log\log X)^{2}}}
\frac{1}{|P|} \leq \log\log X.
\end{align}

  Also, for $1 \leq j \leq \mathcal{J}-1$ and $X$ large enough,
\begin{align}
\label{sumpj}
\mathcal{J}-j \leq \frac{\log(1/\alpha_{j})}{\log 20} , \;\;\;\;\; \sum_{X^{\alpha_{j}} < |P| \leq X^{\alpha_{j+1}}} \frac{1}{|P|}
 = \log \alpha_{j+1} - \log \alpha_{j} + o(1) = \log 20 + o(1) \leq 10.
\end{align}

Combining \eqref{sump1} and \eqref{sumpj}, we arrive at
\begin{align} \label{sumalphaj}
   \sum_{X^{\alpha_{j-1}} < |P| \leq X^{\alpha_{j}}}\frac 1{|P|} \leq \frac{100}{10^{3M/4}}\alpha^{-3/4}_j, \quad 1\leq j  \leq \mathcal{J}.
\end{align}
To see this, note that for $2 \leq j \leq \mathcal{J}-1$, $1 \leq (\alpha_j 10^{M})^{-1}$.  So the bound in \eqref{sumalphaj} follows from the second inequality in \eqref{sumpj}.  Furthermore,
\[ \frac{100}{10^{3M/4}}\alpha^{-3/4}_1 = \frac{100}{10^{3M/4}} ( \log \log X)^{3/2} \geq \log \log X \]
for all sufficient large $X$.  So \eqref{sumalphaj} in the case of $j=1$ follows from the last bound in \eqref{sump1}. \newline

For any real numbers $x, y $ with $y \geq 0$, we set
\begin{align} \label{E}
  E_{y}(x) = \sum_{j=0}^{2\lceil y \rceil} \frac {x^{j}}{j!}.
\end{align}
  We then define for $D\in\mathcal{H}_{2g+1,q}$, any real number $\alpha$ and any $1\leq j  \leq \mathcal{J}$,
\begin{align}
\label{defP}
 {\mathcal P}_j(D)=&  \sum_{\substack{ X^{\alpha_{j-1}} < |Q| \leq X^{\alpha_{j}} \\ Q \text{ prime} }}  \frac{\chi_{D} (Q)}{\sqrt{|Q|}}, \quad {\mathcal M}_j(D, \alpha) = E_{e^5\alpha^{-3/4}_j} \Big (\alpha {\mathcal P}_j(D) \Big ) \quad \mbox{and} \quad {\mathcal M}(D, \alpha)=  \prod^{\mathcal{J}}_{j=1} {\mathcal M}_j(D, \alpha).
\end{align}

  Note that each ${\mathcal M}_j(D,\alpha)$ is a short Dirichlet polynomial of length at most $X^{2\alpha_{j}\lceil e^5\alpha^{-3/4}_j \rceil}$. By taking $X$ large enough, we have that
\begin{align*}
%%\label{exponentbound}
 \sum^{\mathcal{J}}_{j=1} 2\alpha_{j}\lceil e^5\alpha^{-3/4}_j \rceil \leq 5e^510^{-M/4}.
\end{align*}
   It follows that ${\mathcal M}(D, \alpha)$ is also a short Dirichlet polynomial of length not exceeding $X^{5e^510^{-M/4}}$. \newline

  As ${\mathcal P}_j(D)$ is real, it follows from \cite[Lemma 1]{Radziwill&Sound} that ${\mathcal M}_j(D, \alpha), 1 \leq j \leq \mathcal{J}$ and  ${\mathcal M}(D, \alpha)$ are all positive.  If $2k>1$, H\"older's inequality yields
\begin{align}
\label{basicbound1}
\begin{split}
 \sum_{D\in\mathcal{H}_{2g+1,q}} & L(\tfrac{1}{2},\chi_{D}) \mathcal{M}(D, 2k-1) \\
 & \leq \Big ( \sum_{D\in\mathcal{H}_{2g+1,q}}|L(\tfrac{1}{2},\chi_{D})|^{2k} \Big )^{1/(2k)}\Big ( \sum_{D\in\mathcal{H}_{2g+1,q}}
\mathcal{M}(D, 2k-1)^{2k/(2k-1)} \Big)^{(2k-1)/(2k)}.
\end{split}
\end{align}

If $0<2k<1$, we proceed as in \cite[Section 2]{Gao2021-3}.  This gives that for any real number $\alpha$,
\begin{align*}
%%\label{Nprodbound}
 \mathcal{M}(D, \alpha)\mathcal{M}(D, -\alpha)  \geq 1.
\end{align*}
   We set  $\alpha=(2/k-3)^{-1}$ above to note that $0 < \alpha < 1$ when $0 < 2k < 1$. This implies that
\begin{align*}
%%\label{basicbound}
\begin{split}
 \sum_{D\in\mathcal{H}_{2g+1,q}} & L(\tfrac{1}{2},\chi_{D})  \mathcal{M}(D, 2k-1)  \\
 \leq & \sum_{D\in\mathcal{H}_{2g+1,q}}L(\tfrac{1}{2},\chi_{D})^{c}\cdot L(\tfrac{1}{2},\chi_{D})^{1-c} \mathcal{M}(D, 2k-2)^{(1-c)/2}  \cdot
 \mathcal{M}(D,2k-1) \mathcal{M}(D, 2-2k)^{(1-c)/2}.
\end{split}
\end{align*}

  We then apply H\"older's inequality with exponents $2k/c$, $2/(1-c)$, $((1+c)/2-c/(2k))^{-1}$ to the
  last sum above, by noting that these exponents are all greater than $1$ upon taking $0< c < 2k$. This leads to
\begin{align}
\label{basicboundksmall}
\begin{split}
 \sum_{D\in\mathcal{H}_{2g+1,q}} & L(\tfrac{1}{2},\chi_{D}) \mathcal{M}(D, 2k-1) \\
&  \leq  \Big ( \sum_{D\in\mathcal{H}_{2g+1,q}}L(\tfrac{1}{2},\chi_{D})^{2k} \Big )^{c/(2k)}\Big (
\sum_{D\in\mathcal{H}_{2g+1,q}}L(\tfrac{1}{2},\chi_{D})^2\mathcal{M}(D, 2k-2) \Big)^{(1-c)/2} \\
 & \hspace*{2cm}  \times \Big (\sum_{D\in\mathcal{H}_{2g+1,q}}\mathcal{M}(D,2k-1)^{2(2-3k)/(1-2k)}\mathcal{M}(D, 2-2k)^{2}
 \Big)^{(1+c)/2-c/(2k)}.
\end{split}
\end{align}

  Similar arguments apply to sums involving with $L(\tfrac{1}{2},\chi_{P})$ for $P \in \mathcal{P}_{2g+1,q}$. We then
deduce from \eqref{basicbound1} and \eqref{basicboundksmall} that in order to prove \eqref{lowerbounds}, it suffices to establish the following propositions.
\begin{proposition}
\label{Prop4} With notations as above, we have for $k > 0$,
\begin{align*}
%%\label{LprimeN}
\sum_{D\in\mathcal{H}_{2g+1,q}}L(\tfrac{1}{2},\chi_{D}) {\mathcal M}(D, 2k-1) \gg X(\log_q X)^{((2k)^2+1)/2} \; \mbox{and} \;
\sum_{P\in\mathcal{P}_{2g+1,q}}L(\tfrac{1}{2},\chi_{P}) {\mathcal M}(P, 2k-1) \gg  X(\log_q X)^{ ((2k)^2-1)/2}.
\end{align*}
\end{proposition}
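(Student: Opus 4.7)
The plan is to expand the mollifier $\mathcal{M}(D, 2k-1)$ as a short Dirichlet polynomial in $\chi_D$, apply the twisted first moment of Proposition~\ref{Prop-twisted1stmoment} (respectively Proposition~\ref{Prop1'}) for the hyperelliptic (respectively prime) family, and evaluate the resulting diagonal as a product over primes. Expanding each truncated exponential via \eqref{E} and \eqref{defP} yields
\[
\mathcal{M}_j(D, 2k-1) = \sum_{m_j} \frac{c_j(m_j)\,\chi_D(m_j)}{\sqrt{|m_j|}},
\]
supported on monic $m_j$ whose prime factors all satisfy $X^{\alpha_{j-1}} < |Q| \le X^{\alpha_j}$ and $\Omega(m_j) \le 2\lceil e^5 \alpha_j^{-3/4}\rceil$, with coefficients $c_j(m_j) = (2k-1)^{\Omega(m_j)}/\prod_Q e_Q!$ for $m_j = \prod_Q Q^{e_Q}$. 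Taking the product over $j$ and exploiting that the supporting prime ranges are pairwise disjoint, one obtains $\mathcal{M}(D, 2k-1) = \sum_l c(l)\chi_D(l)/\sqrt{|l|}$, supported on $l$ with $|l| \le X^{5e^5 10^{-M/4}}$, as already noted in the excerpt.

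Interchanging sums reduces the target to $\sum_l c(l)/\sqrt{|l|}\cdot\sum_{D\in\mathcal{H}_{2g+1,q}} L(\tfrac12, \chi_D)\chi_D(l)$, to which Proposition~\ref{Prop-twisted1stmoment} applies. The error $O(X^{7/8+\varepsilon}|l|^{1/4+\varepsilon})$ summed against $|c(l)|/\sqrt{|l|}$ is controlled via Cauchy--Schwarz together with a second-moment bound $\sum_l c(l)^2/|l| = O((\log_q X)^{\varepsilon})$ (which follows because $\mathcal{J}\le\log\log\log X$ and each $j$-th Euler factor contributes $O(1)$), giving a total error $O(X^{7/8+\eta+\varepsilon})$ with $\eta = \tfrac{3}{2}\cdot 5 e^5 10^{-M/4}$, which is $O(X^{1-\delta})$ once $M$ is taken sufficiently large. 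The surviving main term equals
\[
\frac{CX}{\zeta_A(2)}\sum_l \frac{c(l)}{\sqrt{|l||l_1|}\,g(l)}\bigg(\log_q\Big(\frac{\sqrt{X}}{|l_1|}\Big) + C_1 + \sum_{P\mid l}\frac{C_1(P)\log_q|P|}{|P|}\bigg).
\]

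Since the prime ranges are disjoint, this sum factors as $\prod_j S_j$, and each $S_j$ in turn factors over primes $Q$ in the $j$-th range into local factors of the shape
\[
1 + \frac{|Q|}{|Q|+1}\Big(1 - \frac{1}{|Q|(|Q|+1)}\Big)^{-1}\sum_{1\le e\le 2\lceil e^5\alpha_j^{-3/4}\rceil}\frac{(2k-1)^e}{e!\,|Q|^{\lceil e/2\rceil}}.
\]
A Taylor expansion identifies each such factor, up to $O(1/|Q|^2)$ and up to the truncation, with $\exp\big((2k-1)/|Q| + (2k-1)^2/(2|Q|)\big)$. Combined with $\sum_{|Q|\le X^{\alpha_{\mathcal{J}}}}1/|Q| = \log\log X + O(1)$ from \eqref{lam2p}, the product across all primes contributes a positive constant times $(\log_q X)^{(2k-1) + (2k-1)^2/2}$. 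Together with $\log_q(\sqrt{X}/|l_1|) \gg \log_q X$ from the dominant $l_1 = 1$ sector, the total main term is $\gg X(\log_q X)^{1 + (2k-1) + (2k-1)^2/2} = X(\log_q X)^{((2k)^2+1)/2}$, as desired.

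The main obstacle is justifying that the truncation $\Omega(m_j)\le 2\lceil e^5\alpha_j^{-3/4}\rceil$ imposed by $E_{y_j}$ alters each local factor by at most a bounded positive multiplicative constant; this is precisely the point where the specific choice of $y_j$ in \eqref{alphadef}--\eqref{defP} is needed, and one handles it via tail estimates for the exponential series in the spirit of \cite{Harper, H&Sound, Gao2021-3}. The prime-moduli statement is handled in an identical manner with Proposition~\ref{Prop1'} in place of Proposition~\ref{Prop-twisted1stmoment}; the only substantive change is that the leading term of that twisted first moment carries a factor $X/\log_q X$ rather than $X$, which reduces the final exponent from $((2k)^2+1)/2$ to $((2k)^2-1)/2$.
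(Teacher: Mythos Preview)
Your approach is correct and coincides with the paper's: the paper does not give a detailed proof but refers to \cite{G&Zhao11} for the Heap--Soundararajan lower-bound scheme, explicitly noting that Propositions~\ref{Prop1'} and~\ref{Prop-twisted1stmoment} supply the required twisted first moments, which is exactly what you invoke. One minor correction: the claim that each $j$-th factor of $\sum_l c(l)^2/|l|$ is $O(1)$ fails for $j=1$, where by \eqref{sump1} one has $\sum_{|Q|\le X^{\alpha_1}}1/|Q|\le\log\log X$ and hence that factor is of size $(\log X)^{(2k-1)^2}$; this is harmless, since any fixed power of $\log X$ is absorbed by the power saving $X^{7/8}$ against $X$.
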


\begin{proposition}
\label{Prop5} With notations as above, we have  $k > 1/2$,
\begin{align*}
%%\label{Nestmation}
\sum_{D\in\mathcal{H}_{2g+1,q}}\mathcal{M}(D, 2k-1)^{2k/(2k-1)}  \ll X ( \log_q X  )^{(2k)^2/2} \; \mbox{and} \;
\sum_{P\in\mathcal{P}_{2g+1,q}}\mathcal{M}(P, 2k-1)^{2k/(2k-1)}  \ll X ( \log_q X  )^{(2k)^2/2-1}.
\end{align*}
  Also, we have for $0<k<1/2$, we have
\begin{align*}
%%\label{Nestmation}
\sum_{D\in\mathcal{H}_{2g+1,q}}\mathcal{M}(D,2k-1)^{\frac {2(2-3k)}{1-2k}}\mathcal{M}(D, 2-2k)^{2} \ll & X ( \log_q X  )^{(2k)^2/2} \quad \mbox{and} \\
 \sum_{P\in\mathcal{P}_{2g+1,q}}\mathcal{M}(P,2k-1)^{\frac {2(2-3k)}{1-2k}}\mathcal{M}(P, 2-2k)^{2} \ll & X ( \log_q X  )^{(2k)^2/2-1}.
\end{align*}
\end{proposition}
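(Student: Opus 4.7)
The plan is to adapt A.~J.~Harper's approach \cite{Harper} to the function field setting, along the lines of \cite{G&Zhao11, Gao2021-3}, treating all four bounds uniformly.

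Focus first on the upper bound for $\sum_{D\in\mathcal{H}_{2g+1,q}}\mathcal{M}(D,2k-1)^{2k/(2k-1)}$ when $k > 1/2$. Setting $2K = 2k/(2k-1)$ and $\alpha = 2k-1$, the first step is to realize $\mathcal{M}(D,\alpha)^{2K}$ as a non-negative short Dirichlet polynomial in $\chi_{D}$. Since $2K$ need not be an integer, I would follow Harper's device of bounding each factor $E_{e^{5}\alpha_{j}^{-3/4}}(\alpha\mathcal{P}_{j}(D))^{2K}$ by a short non-negative combination of integer powers of $\mathcal{P}_{j}(D)$, taking advantage of the fact that the truncation level $e^{5}\alpha_{j}^{-3/4}$ is chosen precisely so that this bound is uniform in $j$. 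Multiplicativity over the disjoint prime ranges $(X^{\alpha_{j-1}},X^{\alpha_{j}}]$ then yields
\[
\mathcal{M}(D,\alpha)^{2K} \ll \sum_{n}\frac{c(n)\chi_{D}(n)}{\sqrt{|n|}},
\]
where $c(n) \geq 0$ is supported on monic $n$ of degree at most $10Ke^{5}10^{-M/4}\log_{q}X$, a quantity which can be made arbitrarily small relative to $\log_{q}X$ by choosing $M=M(k)$ large enough.

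The second step is to apply Lemma~\ref{lem3}. The terms with $n \neq \square$ contribute $O(X^{1/2+\varepsilon})$ by the Weil bound, negligible compared to the target since the support of $c(n)$ is well below $X^{1/2}$. The terms with $n = \square$ produce the main term, which factors across the prime ranges as
\[
\prod_{j=1}^{\mathcal{J}}\prod_{X^{\alpha_{j-1}}<|P|\leq X^{\alpha_{j}}}\Bigl(1+\frac{(2K\alpha)^{2}}{2|P|}+O\bigl(|P|^{-2}\bigr)\Bigr).
\]
Taking logarithms and applying \eqref{sump1} and \eqref{sumpj}, together with the identity $2K\alpha = 2k$, this equals $\exp\bigl(\tfrac{(2k)^{2}}{2}\log\log_{q}X + O(1)\bigr) = (\log_{q}X)^{(2k)^{2}/2}$. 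Combined with $|\mathcal{H}_{2g+1,q}| \sim X$, this yields the claimed bound. The bound over $\mathcal{P}_{2g+1,q}$ is identical after using \eqref{wsum} in place of Lemma~\ref{lem3}, the extra factor $(\log_{q}X)^{-1}$ arising from the prime density.

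For the remaining bounds with $0 < k < 1/2$, the exponent $2k/(2k-1)$ is negative, so $\mathcal{M}(D,2k-1)^{2K}$ cannot be expanded directly; instead I would expand the two-factor product $\mathcal{M}(D,2k-1)^{2(2-3k)/(1-2k)}\mathcal{M}(D,2-2k)^{2}$ simultaneously as a single non-negative Dirichlet polynomial. The key observation is that the effective coefficient of $\sum_{j}\mathcal{P}_{j}(D)$ in the resulting expansion is
\[
(2k-1)\cdot\frac{2(2-3k)}{1-2k}+(2-2k)\cdot 2 = 2k,
\]
producing the same Gaussian variance $(2k)^{2}/2$, after which the computation proceeds verbatim. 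The main obstacle throughout is the first step: producing a non-negative Dirichlet polynomial bound for $E_{e^{5}\alpha_{j}^{-3/4}}(\alpha\mathcal{P}_{j})^{2K}$ when $2K$ is an arbitrary real (possibly large, or effectively negative via the second factor in the $0<k<1/2$ regime). Harper's truncation parameter $e^{5}\alpha_{j}^{-3/4}$ is calibrated to make such a bound uniform in $j$, and establishing this uniformity precisely, together with combining the ranges while controlling all constants, is the technical heart of the proof.
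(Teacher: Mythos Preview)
Your proposal is correct and follows essentially the same approach as the paper, which does not spell out a proof of this proposition but simply refers to the analogous result \cite[Proposition~3.5]{G&Zhao11}; your Harper-style expansion, use of Lemma~\ref{lem3} (resp.\ \eqref{wsum}) for the diagonal/off-diagonal split, and the effective-coefficient computation $(2k-1)\cdot\tfrac{2(2-3k)}{1-2k}+(2-2k)\cdot 2=2k$ leading to variance $(2k)^{2}/2$ are precisely the ingredients behind that reference. The only point to be careful about in execution is that for non-integer exponent $2K$ one does not literally obtain a single non-negative Dirichlet polynomial majorant, but rather proceeds via the usual dichotomy $|\mathcal{P}_{j}|\lessgtr \alpha_{j}^{-3/4}$ (as in the paper's proof of Proposition~\ref{Prop6}); you correctly flag this as the technical heart.
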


\begin{proposition}
\label{Prop6}
  With notations as above, we have for $0<k<1/2$,
\begin{align}
\label{L2estmation}
\begin{split}
\sum_{D\in\mathcal{H}_{2g+1,q}}L(\tfrac{1}{2},\chi_{D})^2{\mathcal M}(D, 2k-2)  \ll X ( \log_q X  )^{((2k)^2+2)/2} \; \mbox{and} \; \sum_{P\in\mathcal{P}_{2g+1,q}}L(\tfrac{1}{2},\chi_{P})^2{\mathcal M}(P, 2k-2)  \ll X ( \log_q X  )^{2k^2}.
\end{split}
\end{align}
\end{proposition}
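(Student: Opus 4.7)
We prove the hyperelliptic case in detail; the prime case is analogous, with Lemma~\ref{PVlemma} replacing Lemma~\ref{lem3} and the main-term prefactor $X$ being replaced by $X/\log_q X$, which accounts for the missing factor of $\log_q X$ in the target bound.

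The plan is the Soundararajan--Harper upper-bound method. First I would invoke Lemma~\ref{prop-logup} with $x = X$ to obtain $|L(\tfrac{1}{2}, \chi_D)|^{2} \leq C (\log_q X) \exp(2 T(D))$, where $T(D) = \sum_{|P|\leq X}\chi_D(P)|P|^{-1/2 - 1/\log X}\log(X/|P|)/\log X$. The portion of $T(D)$ over primes $|P| > X^{\alpha_{\mathcal{J}}}$ contributes $O(1)$ on average by Lemma~\ref{RS} and can be absorbed into the prefactor, while the weight $|P|^{-1/\log X}\log(X/|P|)/\log X$ is $1+o(1)$ uniformly for $|P| \leq X^{\alpha_{\mathcal{J}}}$. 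Hence the relevant exponent is essentially $2 \sum_{j=1}^{\mathcal{J}} \mathcal{P}_j(D)$. Following Harper, I would decompose according to the events $\mathcal{G}_j = \{|\mathcal{P}_j(D)| \leq \alpha_j^{-3/4}/10\}$: set $\mathcal{G} = \bigcap_j \mathcal{G}_j$ and $\mathcal{E}_j = (\bigcap_{i<j}\mathcal{G}_i)\cap \mathcal{G}_j^c$, so that $\mathcal{H}_{2g+1,q}$ is the disjoint union of $\mathcal{G}$ and the $\mathcal{E}_j$. On $\mathcal{G}$, the parameter $y_j = e^{5}\alpha_j^{-3/4}$ comfortably exceeds $|2k \mathcal{P}_j(D)|$, so $E_{y_j}(z) = e^z(1+O(e^{-y_j}))$ for $|z|\leq y_j$ gives the pointwise bound
\[
|L|^{2}\,\mathcal{M}(D, 2k-2)\,\mathbf{1}_{\mathcal{G}} \;\leq\; C'(\log_q X)\prod_{j=1}^{\mathcal{J}} e^{2\mathcal{P}_j(D)} E_{y_j}((2k-2)\mathcal{P}_j(D)) \;\leq\; C''(\log_q X)\prod_{j=1}^{\mathcal{J}} E_{y_j}(2k\mathcal{P}_j(D)),
\]
the second inequality following by replacing $E_{y_j}((2k-2)\mathcal{P}_j)$ with $(1+o(1))e^{(2k-2)\mathcal{P}_j}$ and then re-invoking $e^{2k\mathcal{P}_j}\leq (1+o(1))E_{y_j}(2k\mathcal{P}_j)$ on $\mathcal{G}_j$.

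Next, I would estimate $\sum_D \prod_{j=1}^{\mathcal{J}} E_{y_j}(2k\mathcal{P}_j(D))$ by expanding as a Dirichlet polynomial $\sum_f a_f \chi_D(f)$ of total length at most $X^{5e^{5}10^{-M/4}} = X^{o(1)}$ and applying Lemma~\ref{lem3}. Only squares $f$ contribute to the main term, and the result factors as an Euler product
\[
\prod_{j=1}^{\mathcal{J}}\prod_{X^{\alpha_{j-1}}<|P|\leq X^{\alpha_j}} \Big(1 + \frac{2k^{2}}{|P|} + O(|P|^{-2})\Big) = \exp\Big(2k^{2}\sum_{|P|\leq X^{\alpha_{\mathcal{J}}}} \tfrac{1}{|P|} + O(1)\Big) = (\log_q X)^{2k^{2}},
\]
by Lemma~\ref{RS}. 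Multiplying by the $\log_q X$ prefactor gives the claimed $X(\log_q X)^{2k^{2}+1}$, and the error term $O(X^{1/2}|f|^{1/4})$ in Lemma~\ref{lem3}, summed against $|a_f|$ over the support, contributes only $X^{1/2+o(1)}$, which is negligible.

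For the exceptional set $\mathcal{E}_j$, the trivial inequality $1 \leq (10\alpha_j^{3/4}\mathcal{P}_j(D))^{2\lceil y_j \rceil}$ lets me extend the sum at the cost of inserting a high moment of $\mathcal{P}_j$, which by a standard calculation (as in \cite{Harper, H&Sound}) yields a saving of $(\log_q X)^{-c y_j}$; summed over $j\leq \mathcal{J}$ this is negligible. The main obstacle lies in justifying the pointwise comparison in the first displayed equation despite the mixed-sign coefficients ($2>0$ but $2k-2<0$ for $0<k<1/2$): since the $E_y$'s are polynomials rather than genuine exponentials, the truncation errors must be controlled carefully, and one must simultaneously ensure that the expansion of $\prod_j E_{y_j}(2k\mathcal{P}_j)$ as a Dirichlet polynomial has total length $X^{o(1)}$ so that the Lemma~\ref{lem3} error term stays negligible against the main contribution $X(\log_q X)^{2k^{2}+1}$.
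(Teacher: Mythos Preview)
Your proposal has a genuine gap at the very first step: the fixed choice $x=X$ in Lemma~\ref{prop-logup}. This forces the Dirichlet polynomial $T(D)$ to run over all primes up to $X$, and the assertion that the tail over $X^{\alpha_{\mathcal{J}}}<|P|\leq X$ ``contributes $O(1)$ on average and can be absorbed into the prefactor'' is invalid. The tail $T_{\mathrm{tail}}(D)$ sits inside an exponential, so an average bound on $T_{\mathrm{tail}}$ says nothing about $\exp\bigl(2T_{\mathrm{tail}}(D)\bigr)$; and replacing $e^{2T_{\mathrm{tail}}}$ by a truncated exponential $E_N(2T_{\mathrm{tail}})$ produces a Dirichlet polynomial of length $X^{N}$, far too long for the error term in Lemma~\ref{lem3} to be useful. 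The same defect reappears on your exceptional sets $\mathcal{E}_j$: there you only control $|\mathcal{P}_i|$ for $i<j$, so the factors $\exp\bigl(2\sum_{i\geq j}\mathcal{P}_i\bigr)$ and $\exp(2T_{\mathrm{tail}})$ remain unbounded, and inserting a high moment of $\mathcal{P}_j$ alone cannot compensate.

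The paper's proof resolves this by applying Lemma~\ref{prop-logup} with $x=X^{\alpha_j}$ \emph{adaptively} on each set $\mathcal{S}(j)$. This truncates the prime sum at $X^{\alpha_j}$, so there is no tail and no range $i>j$ to handle in the $L$-function part; the price is the factor $\exp(2/\alpha_j)$ coming from the term $\log X/\log x$, and this is recouped by the saving $e^{-c/\alpha_{j+1}}$ from the high moment of $\mathcal{M}_{j+1,l}$. Correspondingly, the sets $\mathcal{S}(j)$ are defined via the weighted sums $\mathcal{M}_{i,l}$ of \eqref{defM} for \emph{all} $l\geq i$, so that whichever truncation level $j$ is in force, the matching weighted sum $\mathcal{M}_{i,j}$ is known to be small. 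The bottom case $j=0$ is handled separately by H\"older's inequality together with the crude moment bound of Lemma~\ref{prop: upperbound}. This adaptive choice of $x$ is precisely Harper's refinement of Soundararajan's method, and it is exactly what your fixed choice $x=X$ discards.
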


Indeed, assuming the truth of Propositions~\ref{Prop4}, \ref{Prop5} and \ref{Prop6}, it emerges from \eqref{basicboundksmall} that
\[ \Big( \sum_{D\in\mathcal{H}_{2g+1,q}}L(\tfrac{1}{2},\chi_{D})^{2k} \Big)^{c/(2k)} \gg X^{c/(2k)} (\log_q X)^{(2k+1)c/2} . \]
Now the lower bound for the $D$-sum in \eqref{lowerbounds} follows easily from this and the minorant for the $P$-sum therein can be worked out similarly. \newline

  Our proofs of the above propositions are similar to those of Propositions 3.4--3.6 in \cite{G&Zhao11}. Also, the proof of \eqref{upperbounds} is similar to that of Theorem 1.2 in \cite{G&Zhao11}.  We shall therefore only include the proofs of Proposition \ref{Prop6} and \eqref{upperbounds} in the remainder of the paper. We point out that in the course of establishing Proposition \ref{Prop4}, we need to make use of Proposition \ref{Prop1'} and Proposition \ref{Prop-twisted1stmoment}.

\subsection{Proof of Proposition \ref{Prop6} }
\label{sec 4}

  Due to the similarities in their proofs, we only present that of the second bound in \eqref{L2estmation} here. For $D \in \mathcal{H}_{2g+1,q}$, we define
\begin{align}
\label{defM}
{\mathcal M}_{i,j}(D) =& \sum_{\substack{X^{\alpha_{i-1}} < |Q| \leq X^{\alpha_{i}} \\ Q \text{ prime} }}  \frac{\chi_{D} (Q)}{|Q|^{1/2+1/(\log X^{\alpha_{j}})}}
\frac{\log (X^{\alpha_{j}}/|Q|)}{\log X^{\alpha_{j}}}, \quad 1\leq i \leq j \leq \mathcal{J} .
\end{align}
  We also define for $0 \leq j \leq \mathcal{J}-1$,
\begin{align*}
 \mathcal{S}(j) = & \left\{ P \in \mathcal{P}_{2g+1,q}: | {\mathcal M}_{i,l}(P)|  \leq \alpha_{i}^{-3/4} \; \mbox{for all} \; 1 \leq i \leq j \; \mbox{and} \; i
\leq l \leq \mathcal{J}, \right. \\
& \hspace*{4cm}  \left. \mbox{but} \; | {\mathcal M}_{j+1,l}(P)| > \alpha_{j+1}^{-3/4} \; \mbox{for some} \; j+1 \leq l \leq \mathcal{J} \right\} \quad \mbox{and} \\
 \mathcal{S}(\mathcal{J}) =& \left\{ P \in \mathcal{P}_{2g+1,q}: |{\mathcal M}_{i, \mathcal{J}}(P)| \leq \alpha_{i}^{-3/4} \; \mbox{for all} \; 1 \leq i \leq \mathcal{J} \right\}.
\end{align*}

   Note that
$$ \mathcal{P}_{2g+1,q}= \bigcup_{j=0}^{ \mathcal{J}} \mathcal{S}(j).  $$
Thus, in order to establish the second bound in \eqref{L2estmation}, it suffices to show that
\begin{align}
\label{LM}
   \sum_{j=0}^{\mathcal{J}}\sum_{P \in \mathcal{S}(j)}L(\tfrac{1}{2}, \chi_{P})^2 \mathcal{M}(P, 2k-2)
   \ll X ( \log_q X  )^{2k^2}.
\end{align}

  Observe that
\begin{align}
\label{S0est}
\begin{split}
\text{meas}(\mathcal{S}(0)) \ll & \sum_{P \in \mathcal{P}_{2g+1,q}}
\sum^{\mathcal{J}}_{l=1}
\Big ( \alpha^{3/4}_{1}{|\mathcal
M}_{1, l}(P)| \Big)^{2\lceil 1/(10\alpha_{1})\rceil } .
\end{split}
\end{align}

    We write for $1\le l \le \mathcal{J}$,
\begin{align}
\label{MDirichPoly}
 {\mathcal M}_{1, l}(P)^{2\lceil 1/(10\alpha_{1})\rceil } =&  (2\lceil 1/(10\alpha_{1})\rceil)! \sum_{ \substack{ \Omega(f) = 2\lceil 1/(10\alpha_{1})\rceil \\ Q|f \implies  |Q| \leq X^{\alpha_{1}} \\ Q \text{ prime}}}
  \frac{\lambda_l(f)}{\sqrt{|f|}}\frac{\chi_D(f) }{w(f)} .
\end{align}
Here $\Omega(f)$ is the number of prime powers dividing $f$, $w(f)$ the multiplicative function such that
    $w(P^{\alpha}) = \alpha!$ for prime powers $P^{\alpha}$ and $\lambda(f)$ the completely multiplicative function with
\[    \lambda_l(P)= |P|^{-1/(\log X^{\alpha_{l}})} \frac{\log (X^{\alpha_{l}}/|Q|)}{\log X^{\alpha_{l}}} \]
on primes $P$. \newline

  We substitute \eqref{MDirichPoly} into the last expression in \eqref{S0est} and then apply \eqref{wsum} to estimate $\sum_{P \in \mathcal{P}_{2g+1,q}}{\mathcal M}_{1, l}(P)^{2\lceil 1/(10\alpha_{1})\rceil }$ for a fixed $l$. Note that the Dirichlet polynomial given \eqref{MDirichPoly} is a short Dirichlet polynomial whose coefficients are all $\ll X^{\varepsilon}$. It follows that we may ignore the contribution from the error term in \eqref{MDirichPoly} (arising from the $O$-term in \eqref{wsum}) and focus on the contribution from the main term.  This leads to
\begin{align}
\label{maintermbound}
\begin{split}
 \sum_{P \in \mathcal{P}_{2g+1,q}}{\mathcal M}_{1, l}(P)^{2\lceil 1/(10\alpha_{1})\rceil } \ll & \Big (\frac {X}{\log_q X}\Big )\Big( \alpha^{3/4}_{1} \Big)^{2\lceil 1/(10\alpha_{1})\rceil}(2\lceil 1/(10\alpha_{1})\rceil)!\sum_{ \substack{ f=\square \\ \Omega(f) = 2\lceil 1/(10\alpha_{1})\rceil \\ Q|f \implies  |Q| \leq X^{\alpha_{l}} \\ Q \text{ prime}}} \frac{\lambda_l(f)}{\sqrt{|f|}}\frac{1 }{w(f)}.
\end{split}
\end{align}
   
    Upon replacing $f$ by $f^2$ in the sum above and noting that $\lambda_l(f^2)=\lambda^2_l(f) \leq 1$ and $1/w(f^2) \leq 1/w(f)$, we deduce from \eqref{maintermbound} that
\begin{align}
\label{S0upperbound}
\begin{split}
 \sum_{P \in \mathcal{P}_{2g+1,q}}{\mathcal M}_{1, l}(P)^{2\lceil 1/(10\alpha_{1})\rceil }  \ll & \Big (\frac {X}{\log_q X} \Big )\Big( \alpha^{3/4}_{1} \Big)^{2\lceil 1/(10\alpha_{1})\rceil}\frac {(2\lceil 1/(10\alpha_{1})\rceil)!}{(\lceil 1/(10\alpha_{1})\rceil)!}\Big (\sum_{|P| \leq X^{\alpha_{1}}}\frac{\lambda^2_l(P)}{|P|} \Big )^{\lceil 1/(10\alpha_{1})\rceil} \\
 \ll & \Big (\frac {X}{\log_q X} \Big )\Big( \alpha^{3/4}_{1} \Big)^{2\lceil 1/(10\alpha_{1})\rceil}\frac {(2\lceil 1/(10\alpha_{1})\rceil)!}{(\lceil 1/(10\alpha_{1})\rceil)!}\Big (\sum_{|P| \leq X^{\alpha_{1}}}\frac{1}{|P|} \Big )^{\lceil 1/(10\alpha_{1})\rceil} \\
 \ll & \Big (\frac {X}{\log_q X} \Big )\Big( \alpha^{3/4}_{1} \Big)^{2\lceil 1/(10\alpha_{1})\rceil}\frac {(2\lceil 1/(10\alpha_{1})\rceil)!}{(\lceil 1/(10\alpha_{1})\rceil)!}(\log \log X  )^{\lceil 1/(10\alpha_{1})\rceil},
\end{split}
\end{align}
where the last estimation above follows from \eqref{lam2p}. \newline
   
   Further applying the estimates
\begin{align*} 
%\label{Stirling}
  \Big( \frac ne \Big)^n \leq n! \leq n \Big( \frac ne \Big)^n,
\end{align*}
  we deduce that
\begin{align*}
 & \Big (\frac {X}{\log_q X} \Big )\Big( \alpha^{3/4}_{1} \Big)^{2\lceil 1/(10\alpha_{1})\rceil}\frac {(2\lceil 1/(10\alpha_{1})\rceil)!}{(\lceil 1/(10\alpha_{1})\rceil)!}
\leq  2\lceil 1/(10\alpha_{1})\rceil\Big (\frac {X}{\log_q X} \Big ) \Big( \frac{4 \lceil 1/(10\alpha_{1})\rceil  \alpha^{3/2}_{1}}{e} \Big)^{\lceil 1/(10\alpha_{1})\rceil}.
\end{align*}
   
   We apply the above and the observation that $\log \log X=\alpha^{-1/2}_1$ in the last expression in \eqref{S0upperbound} to arrive at 
\begin{align*}
\sum_{P \in \mathcal{P}_{2g+1,q}}{\mathcal M}_{1, l}(P)^{2\lceil 1/(10\alpha_{1})\rceil }\ll 
X e^{-1/(12\alpha_{1})}.
\end{align*}   
   
   We now sum over $l$ to conclude that
\begin{align}
\label{S0bound}
\text{meas}(\mathcal{S}(0)) \ll &
\mathcal{J}X e^{-1/(12\alpha_{1})}\ll X e^{-(\log\log X)^{2}/20}  .
\end{align}

   We then deduce via H\"older's inequality that
\begin{align}
\label{LS0bound}
\begin{split}
 \sum_{P \in \mathcal{S}(0)}L(\tfrac{1}{2}, \chi_{P})^2 & \mathcal{M}(P, 2k-2) \\
\leq & \Big ( \text{meas}(\mathcal{S}(0)) \Big )^{1/4} \Big (
\sum_{P\in\mathcal{P}_{2g+1,q}}L(\half, \chi_{P})^{8} \Big )^{1/4} \Big ( \sum_{P\in\mathcal{P}_{2g+1,q}} \mathcal{M}(P, 2k-2)^{2} \Big )^{1/2}.
\end{split}
\end{align}

 We apply \eqref{wsum} again and use arguments similar to those in the proof of \cite[Proposition 2.2]{G&Zhao8}.  This gives
\begin{align}
\label{N2k2bound}
 \sum_{P\in\mathcal{P}_{2g+1,q}}\mathcal{M}(P, 2k-2)^{2} \ll X ( \log_q X  )^{O(1)}.
\end{align}

 Now, setting $n=4$ and $\varepsilon=1$ in \eqref{upperboundP} implies that
\begin{align}
\label{L8bound}
\sum_{P\in\mathcal{P}_{2g+1,q}} L(\half, \chi_{P})^{8} \ll X ( \log_q X  )^{O(1)}.
\end{align}
Note that because of the factor $\exp ( - (\log \log X)^2/20)$ in \eqref{S0bound}, \eqref{LM} holds so long as the exponents of the logarithms in \eqref{N2k2bound} and \eqref{L8bound} are bounded above by some absolute constant. \newline

  We apply the estimates in \eqref{S0bound}, \eqref{N2k2bound} and \eqref{L8bound} in \eqref{LS0bound} to deduce that
\begin{align*}
\sum_{P \in \mathcal{S}(0)}L(\tfrac{1}{2}, \chi_{P})^2 \mathcal{M}(P, 2k-2)    \ll X ( \log_q X  )^{2k^2}.
\end{align*}

  Thus we may further assume that $j \geq 1$. When $P \in \mathcal{S}(j)$, we set $x=X^{\alpha_j}$ in Lemma~\ref{prop-logup}.  This leads to
\begin{align*}
\begin{split}
 & L(\tfrac{1}{2}, \chi_{P})^2 \ll (\log_q X) \exp \Big(\frac {2}{\alpha_j} \Big) \exp \Big (2
\sum^j_{i=0}{\mathcal M}_{i,j}(P)\Big ).
\end{split}
 \end{align*}

   As  $|M_{i, j}(P)| \leq  \alpha^{-3/4}_i$ if $P \in \mathcal{S}(j)$, we can argue as in the proof of \cite[Lemma 5.2]{Kirila} by setting $x=m M_{i, j}(P)$, $N=e^5\alpha^{-3/4}_i$ in \cite[(5.2)]{Kirila} to see that, for $|m | \leq 2$,
\begin{align*}
\begin{split}
\exp\Big (m \sum^j_{i=0}{\mathcal M}_{i,j}(P)\Big ) \ll
\prod^j_{i=1} E_{e^5\alpha^{-3/4}_i}(m {\mathcal M}_{i,j}(P)),
\end{split}
 \end{align*}
 with $E_{e^5\alpha^{-3/4}_i}$ is defined in \eqref{E}.  In particular, it follows that
\begin{align*}
\begin{split}
\exp\Big (2
\sum^j_{i=0}{\mathcal M}_{i,j}(P)\Big ) \ll
\prod^j_{i=1}E_{e^5\alpha^{-3/4}_i}(2{\mathcal M}_{i,j}(P)).
\end{split}
 \end{align*}

    We then deduce from the description on $\mathcal{S}(j)$ that when $j \geq 1$,
\begin{align}
\label{LboundinSP}
\begin{split}
\sum_{P \in \mathcal{S}(j)} & L(\tfrac{1}{2}, \chi_{P})^2 \mathcal{M}(P, 2k-2)   \\
 \ll &  (\log_q X) \exp \Big(\frac {2}{\alpha_j} \Big)
 \sum^{ \mathcal{I}}_{l=j+1} \sum_{P\in\mathcal{P}_{2g+1,q}}\exp \Big ( 2 \sum^j_{i=1}{\mathcal M}_{i,j}(P)\Big )\mathcal{M}(P, 2k-2) \Big ( \alpha^{3/4}_{j+1}{ | \mathcal
M}_{j+1, l}(P)  |^{2\lceil 1/(10\alpha_{j+1})\rceil } \Big ) \\
\ll &  \exp \Big(\frac {2}{\alpha_j} \Big)  \sum^{ \mathcal{I}}_{l=j+1}
\sum_{P\in\mathcal{P}_{2g+1,q}}
\prod^j_{i=1}\Big ( E_{e^5\alpha^{-3/4}_i}(2{\mathcal M}_{i,j}(P))E_{e^5\alpha^{-3/4}_i}((2k-2){\mathcal P}_{i}(P))\Big ) \\
& \hspace*{3cm} \times E_{e^5\alpha^{-3/4}_{j+1}}(2{\mathcal P}_{j+1}(P)) \Big ( \alpha^{3/4}_{j+1}{\mathcal
M}_{j+1, l}(P)\Big)^{2\lceil 1/(10\alpha_{j+1})\rceil } \prod^{\mathcal{I}}_{i=j+2}  E_{e^5\alpha^{-3/4}_i}((2k-2){\mathcal P}_{i}(P)).
\end{split}
\end{align}
  We now want to apply \eqref{wsum} to treat the sum over $P$ in the last expression above. This is similar to the proof of \cite[Proposition 2.2]{G&Zhao8}, except that we need to first separate the product 
\begin{align*}
%%\label{LboundinSP}
\begin{split}
 E_{e^5\alpha^{-3/4}_{j+1}}(2{\mathcal P}_{j+1}(P)) \Big ( \alpha^{3/4}_{j+1}{\mathcal
M}_{j+1, l}(P)\Big)^{2\lceil 1/(10\alpha_{j+1})\rceil }.
\end{split}
\end{align*}  

  To do so, note that if $|{\mathcal P}_{j+1}(P)| \leq \alpha^{-3/4}_{j+1}$, we apply \cite[(5.2)]{Kirila} again to obtain
\begin{align}
\label{E1est}
\begin{split}
 E_{e^5\alpha^{-3/4}_{j+1}}(2{\mathcal P}_{j+1}(P)) \ll \exp\Big (2{\mathcal P}_{j+1}(P)\Big ) \ll \exp\Big ( 2\alpha^{-3/4}_{j+1} \Big )
\ll 2^{2\lceil 1/(10\alpha_{j+1})\rceil }.
\end{split}
 \end{align}  
  
   On the other hand, if $|{\mathcal P}_{j+1}(P)| > \alpha^{-3/4}_{j+1}$, then
\begin{align}
\label{E2est}
\begin{split}
   E_{e^5\alpha^{-3/4}_{j+1}}(2{\mathcal P}_{j+1}(P)) \le & \sum_{r=0}^{2\lceil e^5\alpha^{-3/4}_{j+1} \rceil} \frac{|2{\mathcal P}_{j+1}(P)|^r}{r!}  \le
|2{\mathcal P}_{j+1}(P)|^{2\lceil e^5\alpha^{-3/4}_{j+1} \rceil}  \sum_{r=0}^{2\lceil e^5\alpha^{-3/4}_{j+1} \rceil} \Big( \alpha^{3/4}_{j+1} \Big)^{2\lceil e^5\alpha^{-3/4}_{j+1} \rceil-r} \frac{1}{r!}  \\
\le &
|2\alpha^{3/4}_{j+1}{\mathcal P}_{j+1}(P)|^{2\lceil e^5\alpha^{-3/4}_{j+1} \rceil}e^{\alpha^{-3/4}_{j+1}} \leq   \Big(2e\alpha^{3/4}_{j+1}{\mathcal P}_{j+1}(P)\Big )^{2\lceil e^5\alpha^{-3/4}_{j+1} \rceil}.
\end{split}
\end{align}

Using \eqref{E1est}, \eqref{E2est} and Cauchy's inequality, we see that
\begin{align*}
%%\label{LboundinSP}
\begin{split}
E_{e^5\alpha^{-3/4}_{j+1}} & (2{\mathcal P}_{j+1}(P)) \Big ( \alpha^{3/4}_{j+1}{\mathcal
M}_{j+1, l}(P)\Big)^{2\lceil 1/(10\alpha_{j+1})\rceil } \\
\ll & \Big ( 2\alpha^{3/4}_{j+1}{\mathcal
M}_{j+1, l}(P)\Big)^{2\lceil 1/(10\alpha_{j+1})\rceil }+ \Big(2e\alpha^{3/4}_{j+1}{\mathcal P}_{j+1}(P)\Big )^{2\lceil e^5\alpha^{-3/4}_{j+1} \rceil}\Big ( \alpha^{3/4}_{j+1}{\mathcal
M}_{j+1, l}(P)\Big)^{2\lceil 1/(10\alpha_{j+1})\rceil } \\
\ll & \Big ( 2\alpha^{3/4}_{j+1}{\mathcal
M}_{j+1, l}(P)\Big)^{2\lceil 1/(10\alpha_{j+1})\rceil }+ \Big(2e\alpha^{3/4}_{j+1}{\mathcal P}_{j+1}(P)\Big )^{4\lceil e^5\alpha^{-3/4}_{j+1} \rceil}+\Big ( \alpha^{3/4}_{j+1}{\mathcal
M}_{j+1, l}(P)\Big)^{4\lceil 1/(10\alpha_{j+1})\rceil }.
\end{split}
\end{align*}    
  
 We apply the above estimation in \eqref{LboundinSP}.  Due to the similarities of the resulting terms, it suffices to show that 
\begin{align*}
%%\label{LboundinSP}
\begin{split}
\sum_{P \in \mathcal{S}(j)}  L(\tfrac{1}{2}, \chi_{P})^2 & \mathcal{M}(P, 2k-2)   \\
 \ll &  \exp \Big(\frac {2}{\alpha_j} \Big)  \sum^{ \mathcal{I}}_{l=j+1}
\sum_{P\in\mathcal{P}_{2g+1,q}}
\prod^j_{i=1}\Big ( E_{e^5\alpha^{-3/4}_i}(2{\mathcal M}_{i,j}(P))E_{e^5\alpha^{-3/4}_i}((2k-2){\mathcal P}_{i}(P))\Big ) \\
& \hspace*{3cm} \times \Big ( 2\alpha^{3/4}_{j+1}{\mathcal
M}_{j+1, l}(P)\Big)^{2\lceil 1/(10\alpha_{j+1})\rceil } \prod^{\mathcal{I}}_{i=j+2}  E_{e^5\alpha^{-3/4}_i}((2k-2){\mathcal P}_{i}(P)).
\end{split}
\end{align*} 
 
 We are now in the position to apply \eqref{wsum} to estimation the sum over $P$ in the last expression above. Similar to our arguments that lead to the bound of $\text{meas}(\mathcal{S}(0))$ in \eqref{S0bound}, we may consider on the main term contribution from  \eqref{wsum} only, which amounts to extracting perfect squares from the product
\begin{align}
\label{factor}
\begin{split}
\prod^j_{i=1}\Big ( E_{e^5\alpha^{-3/4}_i}(2{\mathcal M}_{i,j}(P))E_{e^5\alpha^{-3/4}_i}((2k-2){\mathcal P}_{i}(P))\Big )  \Big ( 2\alpha^{3/4}_{j+1}{\mathcal
M}_{j+1, l}(P)\Big)^{2\lceil 1/(10\alpha_{j+1})\rceil } \prod^{\mathcal{I}}_{i=j+2}  E_{e^5\alpha^{-3/4}_i}((2k-2){\mathcal P}_{i}(P)).
\end{split}
\end{align}  
 As each factor in the product above involves with distinct primes, it follows that the perfect square terms in the above expression arise from the product of perfect squares in each factor. Using the arguments leading \eqref{S0bound} again together with \eqref{sumpj} , we conclude that the contribution from perfect square terms from the factor
 \[ \Big ( 2\alpha^{3/4}_{j+1}{\mathcal M}_{j+1, l}(P)\Big)^{2\lceil 1/(10\alpha_{j+1})\rceil }\]
 is $\ll e^{-60/\alpha_{j+1}}$.  We then proceed as in the proof of \cite[Proposition 2.2]{G&Zhao8} to see that the total contribution from the square terms of other factors in \eqref{factor} is
\begin{align*}
%%\label{LboundinSP1}
\begin{split}
  \ll \prod_{|P| \leq X^{\alpha_{\mathcal{J}}}}\Big  (1+\frac {(2k)^2}{|P|}+O\Big( \frac 1{|P|^2} \Big) \Big ).
\end{split}
\end{align*} 
 Noting that $20/\alpha_{j+1}=1/\alpha_j$, we then deduce that
\begin{align*}
%%\label{LboundinSP1}
\begin{split}
  \sum_{P \in \mathcal{S}(j)}L(\tfrac{1}{2}, \chi_{P})^2 \mathcal{M}(P, 2k-2)
 \ll  X( \log_q X  )^{2k^2} \exp \Big(\frac {2}{\alpha_j} \Big) (\mathcal{I}-j)e^{-60/\alpha_{j+1}}\ll e^{-1/(10\alpha_{j})}X( \log_q X  )^{2k^2}.
\end{split}
\end{align*}

Summing the right-hand side of the above over $j$, a convergent sum, we derive the bound in \eqref{LM} and complete the proof of Proposition \ref{Prop6}.

\subsection{Sharp upper bounds}

   In this section, we prove the bounds given in \eqref{upperbounds}. Again due to similarities in the proofs, we only establish the first estimate in \eqref{upperbounds} here.  Our arguments are similar to those used in the previous section, so we only give a brief description. \newline

  We apply Lemma \ref{prop-logup} to see that for any $k>0$,
\begin{align} \label{basicest}
L(\tfrac{1}{2},\chi_D) ^{2k} & \leq   \exp \left(2k \sum_{\substack{ |P| \leq x}} \frac{ \chi_{D}(P) }{|P|^{1/2+1/\log x } } \frac {\log (x/|P|)}{\log x}+ k \log \log x+2k\frac {\log X}{\log x}+O(1)\right).
\end{align}

 We recall the definition of ${\mathcal M}_{i,j}(D)$ given in \eqref{defM} for $1\leq i \leq j \leq \mathcal{J}$. We also define for $0 \leq j \leq \mathcal{J}$,
\begin{align*}
 \mathcal{T}(j) =& \left\{ D \in \mathcal{H}_{2g+1,q}: | {\mathcal M}_{i,l}(D)| \leq \alpha_{i}^{-3/4} \; \mbox{for all} \; 1 \leq i \leq j, \; \mbox{and} \; i
\leq l \leq \mathcal{J}, \right. \\
& \hspace*{3cm} \left. \mbox{but} \; | {\mathcal M}_{j+1,l}(D)| > \alpha_{j+1}^{-3/4} \; \text{ for some } j+1 \leq l \leq \mathcal{J} \right\} \quad \mbox{and} \\
 \mathcal{T}(\mathcal{J}) =& \left\{ D \in \mathcal{H}_{2g+1, q}: |{\mathcal M}_{i,
\mathcal{J}}(D)| \leq \alpha_{i}^{-3/4} \; \mbox{for all} \; 1 \leq i \leq \mathcal{J}\right\}.
\end{align*}

   It follows that
$$ \mathcal{H}_{2g+1, q}= \bigcup_{j=0}^{ \mathcal{J}} \mathcal{T}(j).  $$
  Thus, to establish the first bound in \eqref{upperbounds}, it suffices to show that
\begin{align}
\label{sumovermj}
   \sum_{j=0}^{\mathcal{J}}\sum_{D \in \mathcal{T}(j)} L(\half, \chi_D)^{2k}
   \ll X (\log_q X)^{k(2k+1)}.
\end{align}

Similar to the arguments in Section \ref{sec 4},  using the case $n=2k$ and $\varepsilon=1$ in \eqref{upperboundD}, we deduce that
\begin{align*}
\sum_{D \in  \mathcal{T}(0)} L(\half, \chi_D)^{2k} \ll X(\log_q X)^{k(2k+1)}.
\end{align*}

If $D \in \mathcal{T}(j)$ for $j \geq 1$, we set $x=X^{\alpha_j}$ in \eqref{basicest}.  This gives that
\begin{align*}
\begin{split}
 & L(\half, \chi_D)^{2k} \ll (\log_q X)^k \exp \Big(\frac {2k}{\alpha_j} \Big) \exp \Big (2k
\sum^j_{i=0}{\mathcal M}_{i,j}(D)\Big ).
\end{split}
 \end{align*}

   As  $M_{i, j} \leq  \alpha^{-3/4}_i$ when $D \in \mathcal{T}(j)$, we can apply \cite[Lemma 5.2]{Kirila} which yields
\begin{align*}
\begin{split}
\exp\Big (2k
\sum^j_{i=0}{\mathcal M}_{i,j}(D)\Big ) \ll
\prod^j_{i=1}E_{e^2k\alpha^{-3/4}_i}(2k{\mathcal M}_{i,j}(\chi)),
\end{split}
 \end{align*}
  where $E_{e^2k\alpha^{-3/4}_i}$ is defined as in \eqref{E}. \newline

  It follows from the description on $\mathcal{T}(j)$ that if $j \geq 1$,
\begin{align*}
%%\label{LboundinSP}
\begin{split}
  \sum_{D \in \mathcal{T}(j)} L(\half, \chi_D)^{2k} \ll & (\log_q X)^{k} \exp \Big(\frac {2k}{\alpha_j} \Big )
 \sum^{\mathcal{J}}_{l=j+1}  \sum_{\substack{D \in \mathcal{H}_{2g+1,q}}}\exp \Big ( 2k \sum^j_{i=1}{\mathcal M}_{i,j}(D)\Big )\Big (
\alpha^{3/4}_{j+1}{\mathcal
M}_{j+1, l}(D)\Big)^{2k \lceil 1/(10\alpha_{j+1})\rceil } \\
\ll & (\log_q X)^{k} \exp \Big(\frac {2k}{\alpha_j} \Big ) \sum^{\mathcal{J}}_{l=j+1}
\sum_{\substack{D \in \mathcal{H}_{2g+1,q}}}
\prod^j_{i=1} E_{e^2k\alpha^{-3/4}_i}(2k{\mathcal M}_{i,j}(D))\Big ( \alpha^{3/4}_{j+1}{\mathcal
M}_{j+1, l}(D)\Big)^{2k\lceil 1/(10\alpha_{j+1})\rceil } .
\end{split}
 \end{align*}

  We then argue as in the proof of \cite[Theorem 1.3]{G&Zhao10}.   By taking $M$ large enough (recall that $M$ is defined in \eqref{alphadef}),
\begin{align*}
\begin{split}
\sum^{\mathcal{J}}_{l=j+1} \sum_{\substack{D \in \mathcal{H}_{2g+1,q}}} &
\prod^j_{i=1}E_{e^2k\alpha^{-3/4}_i}(2k{\mathcal M}_{i,j}(D))\Big ( \alpha^{3/4}_{j+1}{\mathcal
M}_{j+1, l}(D)\Big)^{2k \lceil 1/(10\alpha_{j+1})\rceil } \\
\ll & X(\mathcal{J}-j)e^{-42k/\alpha_{j+1}}  \prod_{|P| \leq X^{\alpha_j}}\Big  (1+\frac {(2k)^2}{2|P|}+O\Big( \frac 1{|P|^2} \Big) \Big ) \ll  e^{-41k/\alpha_{j+1}} X (\log_q X)^{2k^{2}}.
\end{split}
 \end{align*}

 This implies that
\begin{align*}
\begin{split}
 & \sum_{D \in \mathcal{T}(j)} L(\half, \chi_D)^{2k}
\ll   e^{-k/(20\alpha_{j})} X (\log_q X)^{k(2k+1)}.
\end{split}
 \end{align*}

Summing over the convergent sum over $j$, the above implies that \eqref{sumovermj} holds so that the estimates in \eqref{upperbounds} are valid and this completes the proof of Theorem \ref{thmorderofmag}.

\vspace*{.5cm}

\noindent{\bf Acknowledgments.}  P. G. is supported in part by NSFC grant 11871082 and L. Z. by the the Faculty Silverstar Grant PS65447 at the University of New South Wales (UNSW). The authors are grateful to the anonymous referee for his/her very careful reading of the manuscript and many helpful suggestions.

\bibliography{biblio}
\bibliographystyle{amsxport}

\vspace*{.5cm}

\noindent\begin{tabular}{p{6cm}p{6cm}p{6cm}}
School of Mathematical Sciences & School of Mathematics and Statistics \\
Beihang University & University of New South Wales \\
Beijing 100191 China & Sydney NSW 2052 Australia \\
Email: {\tt penggao@buaa.edu.cn} & Email: {\tt l.zhao@unsw.edu.au} \\
\end{tabular}

\end{document}